\documentclass[11pt,a4paper]{amsart}
\usepackage[active]{srcltx}
\usepackage{amsmath,amsfonts,amssymb,epsf,mathrsfs,amsthm,lmodern}
\usepackage[normalem]{ulem}
\usepackage[T1]{fontenc}% fontes T1 (n{\'e}cessaire pour le fran{\c c}ais) 
\usepackage[utf8x]{inputenc} 
%\entrymodifiers={!!<0pt,0.7ex>+}
%\usepackage{graphicx}%,diagrams}
\usepackage[backref=page,bookmarksopen=true]{hyperref}

\usepackage{enumitem}

\addtolength{\textwidth}{0.23\textwidth}
\setlength{\oddsidemargin}{+0,10cm} \setlength{\evensidemargin}{-0,15cm}

\let\bbbibitem\bibitem
\renewcommand{\bibitem}[2][]{\bbbibitem[#1]{#2}\label{#2}} 

\def\fin{\hfill\hbox{\hskip .2cm $\square$}\medskip}

\theoremstyle{plain}
\newtheorem{theo}{Theorem}[section]

\newtheorem{coro}[theo]{Corollary}

\theoremstyle{definition}

\newtheorem{fact}[theo]{Fact}

\theoremstyle{remark}
\newtheorem{rema}[theo]{Remark}

\def\cal{\mathcal}

\def\g{\gamma}
\def\G{\Gamma}

\def\Ad{{\rm Ad}}

\def\PU{{\rm PU}}
\def\SU{{\rm SU}}

\def\SO{{\rm SO}}

\def\C{{\mathbb C}}

\def\N{{\mathbb N}}
\def\Z{{\mathbb Z}}
\def\R{{\mathbb R}}
\def\fd{\longrightarrow}
\def\pfd{\rightarrow}

\def\la{\langle}
\def\ra{\rangle}

\def\X{{\cal X}}
\def\Y{{\cal Y}}

\def\gg{{\mathfrak g}}
\def\kg{{\mathfrak k}}
\def\mg{{\mathfrak m}}
\def\pg{{\mathfrak p}}
\def\hg{{\mathfrak h}}
\def\qg{{\mathfrak q}}
\def\ug{{\mathfrak u}}

\def\ag{{\mathfrak a}}
\def\rg{{\mathfrak r}}
\def\sg{{\mathfrak s}}

\def\om{\omega}

\def\comp{{C}}
\def\stabY{{K_{\cal Y}}}
\def\mm{\Omega}
\def\D{{\cal D}}

%%%%%%%%%%%%%%%%%%%%%%%%%%%%%%%%%%%%%%%%%%%%%%%%%%%%%%%%%%%%%%%%%%%%%
%%nouvelles commandes
%%%%%%%%%%%%%%%%%%%%%%%%%%%%%%%%%%%%%%%%%%%%%%%%%%%%%%%%%%%%%%%%%%%%%

\def\Ho{{{\mathbb H}^{1}_{\mathbb C}}}

\def\u{{\mathfrak u}}

\def\lg{{\mathfrak l}}

\def\Y{{\mathcal Y}}

\begin{document}

\title[On the equidistribution of totally geodesic submanifolds]{On the equidistribution of
  totally geodesic submanifolds in locally symmetric spaces and application to boundedness
  results for negative curves and exceptional divisors}

\author{Vincent Koziarz} 
\author{Julien Maubon}
\address[Vincent Koziarz]{Univ. Bordeaux, IMB, CNRS, UMR 5251, F-33400 Talence, France}
\email{vkoziarz@math.u-bordeaux1.fr}
\address[Julien Maubon]{Institut \'Elie Cartan, UMR 7502,  Universit\'e
   de Lorraine, B. P. 70239, F-54506 Vand\oe uvre-l\`es-Nancy Cedex,
    France}
  \email{julien.maubon@univ-lorraine.fr}

%\date{\today} 

\sloppy

\begin{abstract}      
We prove that on a smooth
complex surface which is a
compact quotient of the bidisc or of the 2-ball, there is at most a
finite number of totally geodesic curves with negative self intersection.  More generally, there are
only finitely many exceptional totally geodesic divisors in a compact Hermitian 
locally symmetric space of the noncompact type of dimension at least 2.  
This is deduced from a convergence result for currents of integration along totally geodesic
subvarieties in compact Hermitian locally symmetric spaces which itself follows from  
 an equidistribution theorem for totally geodesic submanifolds in a locally symmetric
space of finite volume.
\end{abstract}

\maketitle

\section{Introduction}

Our motivation for writing this note comes from a question about totally geodesic curves in compact
quotients of the $2$-ball related to the so-called Bounded Negativity Conjecture. This conjecture
states that if $X$ is a smooth complex projective surface, there exists a number $b(X)\geq 0$ such
that any negative curve on $X$ has self-intersection at least $-b(X)$. On a Shimura surface $X$, i.e.\
an arithmetic compact quotient of the bidisc or of the $2$-ball, one can
ask wether such a conjecture holds for Shimura (totally geodesic) curves. In~\cite{Bal}, using an
inequality of Miyaoka~\cite{Miyaoka}, it was proved that on a 
quaternionic Hilbert modular surface, that is, a compact 
quotient of the bidisc, there are only a finite number of negative Shimura curves. The same question
for Picard modular surfaces, i.e.\ quotients of the $2$-ball, was open as we learned from
discussions with participants of the MFO mini-workshops ``K\"ahler Groups''
(\url{http://www.mfo.de/occasion/1409a/www_view}) and ``Negative Curves on Algebraic Surfaces''
(\url{http://www.mfo.de/occasion/1409b/www_view}). See the report~\cite{DKMS} and~\cite[Remarks 3.3
\& 3.7]{Bal}. There was a general feeling that this should follow from an
equidistribution result about totally geodesic submanifolds in locally symmetric
manifolds. Using such a result, we prove that this is indeed true (we include the already known 
case of the bidisc since the same method also implies it):

\begin{theo}\label{negativecurves}
Let $X$ be closed complex surface whose universal cover is biholomorphic to either the 2-ball or
  the bidisc. Then $X$ only supports a finite number of totally geodesic curves with negative self
intersection.  

More generally, let $X$ be a closed Hermitian locally symmetric space of the noncompact type of complex dimension
$n\geq 2$. Then $X$ only supports a finite number of exceptional totally geodesic divisors.  
\end{theo}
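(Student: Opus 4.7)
My plan is to argue by contradiction using the convergence-of-currents result advertised in the abstract. Assume $X$ carries infinitely many pairwise distinct exceptional totally geodesic divisors $D_k$, and form the normalized currents $\mu_k := [D_k]/\mathrm{vol}(D_k)$. Each $\mu_k$ is a closed positive $(1,1)$-current of fixed total mass (tested against $\omega^{n-1}$), so by weak compactness some subsequence converges weakly to a closed positive $(1,1)$-current $T_\infty$ on $X$. The authors' convergence theorem identifies $T_\infty$ as descending from a $G$-invariant $(1,1)$-form on the universal cover $\tilde X = G/K$: in the irreducible Hermitian case this forces $T_\infty = c\,\omega$ for some $c>0$, and in the reducible case $T_\infty = \sum_j c_j\,\omega_j$ is a non-negative combination of the K\"ahler forms of the irreducible factors.

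Since $H^{1,1}(X,\R)$ is finite-dimensional, the weak convergence of currents forces convergence of the corresponding cohomology classes, $\{D_k\}/\mathrm{vol}(D_k)\longrightarrow\{T_\infty\}$. Continuity of the intersection pairing then yields
\[
\frac{\{D_k\}^2\cdot\{\omega\}^{n-2}}{\mathrm{vol}(D_k)^2}\ \longrightarrow\ \{T_\infty\}^2\cdot\{\omega\}^{n-2}.
\]
In the irreducible case the right-hand side equals $c^2\{\omega\}^n>0$. By hypothesis each $D_k$ is exceptional, so this same generalized self-intersection should be \emph{negative}: for $n=2$ this is literally $D_k\cdot D_k<0$, and in higher dimension the natural Grauert-type contractibility criterion amounts to $\{D_k\}^2\cdot\{\omega\}^{n-2}<0$ (i.e.\ $N_{D_k/X}$ has negative degree against $(\omega|_{D_k})^{n-2}$). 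Combining these two facts contradicts each other for all $k$ sufficiently large, which proves the theorem.

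The genuine work lies upstream, in the convergence-of-currents input itself, which the authors derive from an equidistribution theorem for totally geodesic submanifolds in finite-volume locally symmetric spaces --- the natural setting for Ratner/Mozes--Shah-type dynamics of unipotent subgroups on the frame bundle of $\tilde X$. A secondary subtlety concerns the reducible case: if the $D_k$ were asymptotically ``aligned'' with an irreducible factor of complex dimension one, then $\{T_\infty\}^2\cdot\{\omega\}^{n-2}$ could vanish rather than be strictly positive, and the sign argument would collapse. One must therefore verify that this degeneration cannot occur for sequences of exceptional divisors --- for instance by showing directly that factor-aligned totally geodesic divisors have non-negative self-intersection (as is the case in the bidisc, where only ``graph-type'' geodesic curves can be negative), so that only genuinely non-degenerate sequences need be ruled out by the cohomological argument above.
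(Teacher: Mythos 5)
Your overall strategy is essentially the one in the paper: push infinitely many exceptional totally geodesic divisors $D_k$ through the normalized convergence of currents $\frac{1}{A_k}[D_k]\to[\mm]$, then derive a contradiction from intersection numbers. But two steps in your argument are asserted rather than proved, and they are precisely where the paper has to work.

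First, you take as given that exceptionality of $D_k$ forces $\{D_k\}^2\cdot\{\omega\}^{n-2}<0$, calling this a ``natural Grauert-type contractibility criterion.'' This is not a standard criterion in dimension $n>2$, and you do not prove it. The paper derives this inequality indirectly: it chooses a smooth surface $N=H_1\cap\cdots\cap H_{n-2}$ which is a complete intersection of divisors in $|mK_X|$ meeting all the $D_j$ transversally; then $C_j:=N\cap D_j$ is contracted by $\phi_j|_N$ (since $\phi_j$ contracts $D_j$), and Grauert's surface criterion gives $C_j^2<0$ on $N$, i.e.\ $m^{n-2}[D_j]^2\cdot[\omega_X]^{n-2}<0$. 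Your proposal is missing this reduction to a surface section, and without it the negativity you need is not established.

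Second, your contradiction needs $\{\mm\}^2\cdot\{\omega\}^{n-2}>0$, and you correctly flag that in the reducible case $\mm$ may only be non-negative. But your proposed resolution (rule out sequences ``asymptotically aligned with a one-dimensional factor'') is not a proof, and it misidentifies the source of the degeneration. The paper handles this more carefully in two ways: (i) Fact~\ref{div} classifies totally geodesic divisors in reducible $\X$, and in the only factor case $\X\simeq\mathcal D\times\mathbb H^1_\C$ it shows (via irreducibility of $\Gamma$ or, failing that, a product decomposition of $X$) that the divisor cannot be exceptional, so no subsequence of local factors survives; (ii) Corollary~\ref{ratnercurrents} only guarantees that $\mm$ does \emph{not vanish in restriction to} $Y_j$, not that $\mm$ is strictly positive, and the final contradiction is built on the one-sided bound $I_j=\bigl([\mm]-\frac1{A_j}[D_j]\bigr)\cdot\frac1{A_j}[D_j]\cdot m^{n-2}[\omega_X]^{n-2}\geq[\mm]\cdot\frac1{A_j}[D_j]\cdot m^{n-2}[\omega_X]^{n-2}=c\,m^{n-2}n!\,\mathrm{vol}(X)>0$ (using only $[C_j]^2<0$ and $\mm|_{D_j}\neq0$), whereas $I_j\to0$. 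This cleverly avoids having to compute $\{\mm\}^2\cdot\{\omega\}^{n-2}$ at all. Your version, which needs strict positivity of that number, would in fact be true in the non-factor reducible cases (a short computation with $\omega_1,\ldots,\omega_\ell$ shows it), but you neither perform that computation nor dispose of the factor cases, so the argument as written has gaps at both ends of the final inequality.
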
           

It is known that the irreducible Hermitian symmetric spaces of the noncompact type
admitting totally geodesic divisors are those associated with the Lie groups $\SU(n,1)$, $n\geq 1$, and
$\SO_0(p,2)$, $p\geq 3$, and then that the divisors are associated with the subgroups
$\SU(n-1,1)$ and $\SO_0(p-1,2)$
respectively, see~\cite{Onishchik,BerndtOlmos}. Note however that Theorem~\ref{negativecurves} also 
applies in the 
case of reducible symmetric spaces.

The first assertion of this result has been obtained independently by  
M.~Möller and D.~Toledo~\cite{MT}, who also participated in the aforementioned workshops.  We
refer to their paper for 
background on Shimura surfaces and 
Shimura curves, and in particular for a discussion of the arithmetic quotients of the 2-ball and of
the bidisc  which admit infinite families of pairwise 
distinct totally geodesic curves. Their proof is based on an equidistribution theorem
for curves in 2-dimensional Hermitian locally symmetric spaces~\cite[\textsection 2]{MT}. Here we
have chosen to present a more general result, see Theorem~\ref{ratner} below,  in the hope that it
can be useful in a wider setting (and indeed it implies the second assertion of the theorem).   

\medskip

Henceforth we will be interested in closed totally geodesic (possibly singular) submanifolds in non
positively curved locally symmetric manifolds of finite volume. 

\smallskip

Let $\X$ be a symmetric space of the noncompact type, $G={\rm Isom}_0(\X)$ the connected
component of the isometry 
group of $\X$, 
$\G$ a torsion-free lattice of $G$
and $X$ the quotient locally 
symmetric manifold $\G\backslash\X$. 

Complete connected totally geodesic (smooth) submanifolds of $\X$ are naturally symmetric
spaces themselves and we will call such a subset $\Y$ a symmetric subspace of the 
noncompact type of
$\X$ if as a symmetric space it is of the noncompact type, i.e. it has no Euclidean factor.
Up to the action of $G$, there is only a finite number of symmetric subspaces of the non compact
type in $\X$, see Fact~\ref{finite}. The orbit of $\Y$ under $G$ will be called the {\em kind} of
$\Y$.

A subset $Y$ of $X$ will be called a {\em closed totally geodesic submanifold of the
  noncompact type} of $X$ if it is of the form $\G\backslash\G \cal Y$, where $\cal Y$ is a
symmetric subspace of the noncompact type of $\X$ such that 
if $S_{\Y}< G$ is the stabilizer of $\cal Y$ in
$G$, $\G\cap S_{\Y}$ is a lattice in $S_{\Y}$. The {\em kind} of $Y=\G\backslash\G \cal Y$ is by
definition the kind of $\Y$. 

It will simplify the exposition to consider only symmetric
subspaces of $\X$ passing through a fixed point $o\in\X$. Therefore we define equivalently a 
closed totally geodesic submanifold of the noncompact type $Y$ of $X$ to be a subset of the form
$\G\backslash\G g\cal Y$, where $\cal Y\subset\X$ is a 
symmetric subspace of the noncompact type passing through $o\in\X$, and $g\in G$ is such that 
if $S_{\Y}< G$ is the stabilizer of $\cal Y$ in 
$G$, $\G\cap g S_{\Y} g^{-1}$ is a lattice in $g S_{\Y} g^{-1}$.

Such a  $Y$ is indeed a closed totally geodesic 
submanifold of $X$, which might 
be singular, and it supports a natural probability measure $\mu_Y$ which can be defined as
follows. 
By assumption, the (right) $S_\Y$-orbit $\G\backslash \G g S_\Y\subset\G\backslash G$ is closed
and supports a unique $S_\Y$-invariant probability measure (\cite[Chap. 1]{Rag}).  
We will denote by $\mu_Y$ the probability measure on $X$  whose support is $Y$ and which is defined
as the push forward of the previous measure by the projection $\pi:\G\backslash G\fd
X=\Gamma\backslash G/K$, where $K$ is the isotropy subgroup of $G$ at $o$. In the
special case when $S_\Y=G$, we 
obtain the natural probability measure $\mu_X$ on $X$. 

\medskip

We will say that a closed totally geodesic submanifold of the noncompact type $Y=\G\backslash\G g\cal Y$
as above is a {\em local factor} if $\cal Y\subset\cal X$ is a {\em factor}, meaning that
there exists a totally geodesic isometric embedding $f:\cal Y\times \R\fd\cal X$ such that
$f(y,0)=y$ for all $y\in\Y$.

\medskip

We may now state the following

\begin{theo}\label{ratner}
Let $\X$ be a symmetric space of the noncompact type, $\G$ a torsion-free lattice of the
connected component $G$ of its isometry group and $X$ the quotient manifold $\G\backslash\X$. Let
$(Y_j)_{j\in\N}$ be a sequence of closed totally 
geodesic submanifolds of the noncompact type of $X$. Assume that no subsequence of $(Y_j)_{j\in\N}$ is
either composed of local factors or contained in a closed totally geodesic proper submanifold of
 $X$. 

Then the sequence of probability measures $(\mu_{Y_j})_{j\in\N}$ converges to the probability 
measure $\mu_X$. 
\end{theo}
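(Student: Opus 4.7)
The strategy I would adopt is to reformulate the statement inside $\G\backslash G$ and apply Ratner's measure classification theorem, via the Mozes--Shah convergence theorem for measures supported on orbits of subgroups generated by Ad-unipotent one-parameter subgroups.

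First, I would lift the problem to $\G\backslash G$. By construction, $\mu_{Y_j}$ is the pushforward under $\pi\colon\G\backslash G\fd X$ of the unique $S_{\Y_j}$-invariant probability measure $\nu_j$ on the closed orbit $\G\backslash\G g_j S_{\Y_j}$; since $K$ is compact, it suffices to show $\nu_j\to\nu_G$ weakly, where $\nu_G$ denotes the $G$-invariant probability measure on $\G\backslash G$. Using the finiteness of kinds (Fact~\ref{finite}), I would pass to a subsequence on which all the $\Y_j$ equal a fixed symmetric subspace $\Y$, so that $H:=S_\Y$ is a fixed subgroup. The key structural observation is that, $\Y$ being of noncompact type, the noncompact semisimple part of $H$ is generated by Ad-unipotent one-parameter subgroups, while the compact normal subgroup of $H$ (fixing $\Y$ pointwise) acts trivially on the orbit modulo $K$; one may therefore safely replace $H$ by its subgroup generated by Ad-unipotent one-parameter subgroups without changing $\mu_{Y_j}$.

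Next I would invoke the Mozes--Shah convergence theorem (built on Ratner's measure classification and the Dani--Margulis nondivergence estimates). After passing to a further subsequence, $\nu_j$ converges weakly to an algebraic probability measure $\nu$: there exist a closed connected subgroup $L\leq G$ generated by Ad-unipotent one-parameter subgroups, and some $h\in G$, such that $\G\backslash\G hL$ is a closed finite-volume $L$-orbit and $\nu$ is the $L$-invariant probability measure on it. The decisive additional output is that the supports of the $\nu_j$ are eventually contained in this limiting orbit: for $j$ large one can find $\gamma_j\in\G$ and $\delta_j\in G$ with $\delta_j\fd 1$ such that $\gamma_j g_j\delta_j H\delta_j^{-1}g_j^{-1}\gamma_j^{-1}\subseteq hLh^{-1}$. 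In particular $Y_j\subset Z:=\pi(\G\backslash\G hL)$ for all large $j$.

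It would then remain to identify $L$. If $L=G$, then $\nu=\nu_G$ and we are done. Otherwise I would argue that $Z$ is a proper closed totally geodesic submanifold of $X$, contradicting the hypothesis — unless the extra directions of $L$ beyond $H$ come from a flat de Rham factor along $\Y$, which is precisely the situation in which the sequence consists of local factors, again contradicting the hypothesis. This reduction rests on a Lie-theoretic classification: a closed connected subgroup of $G$ containing the stabilizer $S_\Y$ of a symmetric subspace of noncompact type and generated by Ad-unipotent one-parameter subgroups is itself the stabilizer of a symmetric subspace of noncompact type of $\X$, the only exceptional situation being the one accounted for by the local factor condition. Establishing this structural description cleanly and handling the local factor exception is where I expect the main obstacle to lie; once it is in place, the Mozes--Shah output yields $\nu_j\to\nu_G$ and hence $\mu_{Y_j}\to\mu_X$.
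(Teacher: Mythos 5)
Your overall architecture---lift to $\G\backslash G$, reduce to a fixed kind via Fact~\ref{finite}, invoke a Ratner-type limit theorem to produce an algebraic limit measure supported on a closed orbit of some overgroup $L$, then identify $L$ as $G$---matches the paper's, which uses Benoist--Quint's Theorem~1.5 where you invoke Mozes--Shah; those are essentially interchangeable inputs. But two of the steps you sketch have genuine gaps.

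First, ergodicity. Writing $S_{\Y}^0 = HU$ with $H$ the semisimple (without compact factor) part and $U\subset Z_G(H)\cap K$ as in Fact~\ref{stab}, the $S_{\Y}$-invariant probability on the closed $S_{\Y}$-orbit $\G\backslash\G g_j S_{\Y}$ need \emph{not} be ergodic under $H$: if, say, $\G\cap g_j S_{\Y} g_j^{-1}$ sits inside $g_j H g_j^{-1}$, the ergodic decomposition under $H$ is indexed by a quotient of $U$. Since Mozes--Shah requires ergodicity under a one-parameter Ad-unipotent subgroup, you cannot simply ``replace $H$ by its unipotently generated part'' and keep the same measure; the paper instead shrinks the acting group to $\bar M_j H$, where $\bar M_j$ is the closure in $U$ of the $U$-component of $g_j^{-1}\G g_j\cap S_{\Y}^0$, for which the invariant probability is $H$-ergodic and still pushes forward to $\mu_{Y_j}$. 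Second, the local-factor hypothesis does much more work than the single ``exceptional case'' you allot to it: by Fact~\ref{cent} it is equivalent to compactness of $Z_G(H)$, and this compactness is needed (i) for Fact~\ref{geod}, which guarantees that \emph{any} connected $L\supset H$---in particular the limit group---has a totally geodesic orbit (false without the centralizer hypothesis, and this is precisely the mechanism behind your proposed ``Lie-theoretic classification of overgroups of $S_{\Y}$,'' which as you state it omits the needed hypothesis and is not true in general); (ii) for nonescape of mass: the paper invokes Eskin--Margulis, which applies because $H$ lies in no proper $\G$-rational parabolic, again a consequence of $Z_G(H)$ compact, a point your sketch passes over when asserting the limit is a probability; and (iii) in a separate convex-distance-function argument, showing that the $Y_j$ are for large $j$ actually \emph{contained in} (not merely at bounded distance from) a closed totally geodesic proper submanifold when $L\neq G$, which is what finally contradicts the second hypothesis of the theorem. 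Filling (i)--(iii) is exactly the obstacle you anticipated, and Facts~\ref{cent},~\ref{geod} together with the convexity argument are what the paper uses to do it.
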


\begin{rema}\label{original}
  Although we have not been able to find its exact statement in the literature, this theorem is
  certainly known to experts in homogeneous dynamics and follows from several equidistribution
  results originating in the work of M. Ratner on unipotent flows. In the case of special
  subvarieties of Shimura varieties,
  a very similar result has been obtained by L.~Clozel and E.~Ullmo \cite{CU,U}. From the 
  perspective of geodesic flows (which is in a sense orthogonal to unipotent flows),
  Theorem~\ref{ratner} can probably also be deduced from
  A. Zeghib's article~\cite{Zeghib} (at least for ball quotients it can be).  

The proof we give here is based on a result of Y.~Benoist and J.-F.~Quint~\cite{BQ3}, 
see~\ref{BenoistQuint}.         
\end{rema}

\begin{rema}\label{remcent}
A symmetric subspace $\Y\subset\X$ of the noncompact type is the orbit of a point in $\X$ under a
connected semisimple 
subgroup without compact factor $H_\Y$ of $G={\rm Isom}_0(\X)$. The assumption that $\Y$ is not a factor  
means that the centralizer $Z_G(H_\Y)$ of $H_\Y$ in $G$ is compact. 
Another equivalent formulation is that $\cal Y$ is the only totally geodesic 
orbit of $H_\Y$ in $\cal X$. See Fact~\ref{cent} for a proof. 

This assumption seems quite strong but the conclusion of Theorem~\ref{ratner} is false in general 
without it as the following simple example shows. Let
$X=\Sigma_1\times\Sigma_2$ be the product of two Riemann surfaces of genus at least 2 and let
$(z_j)_{j\in\N}$ be a sequence of distinct points in $\Sigma_1$ such that no subsequence is
contained in a proper geodesic of $\Sigma_1$. Set $Y_j=\{z_j\}\times\Sigma_2$. Then for any
subsequence of $(z_j)$ 
converging to some $z\in\Sigma_1$, the corresponding subsequence of measures $\mu_{Y_j}$ converges
to $\mu_{\{z\}\times\Sigma_2}$. 

We observe that for rank 1 symmetric spaces, and in the case of uniform {\em irreducible} lattices of the
bidisc, the assumption is 
automatically satisfied (see the proof of Theorem~\ref{negativecurves} in~\ref{ProofNC}). 

It would be interesting to know whether it is still needed if one assumes e.g. that $\X$ or $\G$ is
irreducible.
\end{rema}

\medskip

In the
case of Hermitian locally symmetric spaces, 
Theorem~\ref{ratner} gives a convergence result for currents of integration along closed complex totally
geodesic subvarieties (suitably renormalized) from which Theorem~\ref{negativecurves} will follow.
Recall that a on a complex manifold $X$ of dimension $n$, a current $T$ of bidegree
  $(n-p,n-p)$ is said to be {\it (weakly) positive} if for any choice of smooth $(1,0)$-forms
  $\alpha_1,\dots,\alpha_{p}$ on $X$, the distribution $T\wedge
  i\alpha_1\wedge\bar\alpha_1\wedge\dots\wedge i\alpha_{p}\wedge\bar\alpha_{p}$ is a positive
  measure.

\begin{coro}\label{ratnercurrents}
Let $X$ and  $(Y_j)_{j\in \N}$ satisfy the assumptions of 
Theorem~\ref{ratner}  and assume in addition that $X$ is a compact Hermitian locally symmetric space
of complex dimension $n$ and that the $Y_j$'s are complex $p$-dimensional subvarieties of $X$ of the
same kind. 

Then there exists a closed positive $(n-p,n-p)$-form $\mm$ on $X$ (in the sense of currents),
induced by a $G$-invariant $(n-p,n-p)$-form on $\X=G/K$, such that
for any $(p,p)$-form $\eta$ on $X$,
$$\lim_{j\rightarrow +\infty} \frac 1{{\rm vol}(Y_j)}\int_{Y_j} \eta=
\frac {1}{{\rm vol}(X)}\int_X \eta\wedge\mm
$$
Moreover, up to a
positive constant, $\mm$ depends only on the kind of the $Y_j$'s and 
if the $Y_j$'s are divisors, i.e. if $p=n-1$, then for any $j$, the $(1,1)$-form $\mm$ restricted to
$Y_j$ does not vanish.
\end{coro}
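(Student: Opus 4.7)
The plan is to lift the equidistribution of Theorem~\ref{ratner} to the bundle of complex $p$-planes of type $\Y$, so that the pairing with a $(p,p)$-form is replaced by the integration of a scalar function against a measure. Fix a reference symmetric subspace $\Y\subset\X$ of the common kind of the $Y_j$'s through $o$, let $Q:=K\cap S_\Y$, and consider $\mathcal{P}:=\G\backslash G/Q$, a fiber bundle over $X$ whose fiber over $x$ is the $K_x$-orbit $\mathcal{O}_x$ of complex $p$-planes of type $\Y$ in $T_xX$. Each $Y_j$ lifts canonically to a closed subset $\widetilde Y_j\subset\mathcal{P}$ (the tangent graph $\{(x,T_xY_j):x\in Y_j\}$) carrying a probability measure $\widetilde\mu_{Y_j}$ defined as the push-forward to $\mathcal{P}$ of the $S_\Y$-invariant probability measure on $\G\backslash\G g_j S_\Y\subset\G\backslash G$. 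Since the Benoist--Quint result underlying Theorem~\ref{ratner} operates on $\G\backslash G$, the same argument yields $\widetilde\mu_{Y_j}\to\widetilde\mu_\mathcal{P}$ weakly, where $\widetilde\mu_\mathcal{P}$ is the normalized Haar measure on $\mathcal{P}$.

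For a smooth $(p,p)$-form $\eta$ on $X$, define a smooth function $\psi_\eta$ on $\mathcal{P}$ by $\eta_x|_V=\psi_\eta(x,V)\,dV_V$, where $dV_V$ is the Hermitian volume form on $V\subset T_xX$. Since $\widetilde\mu_{Y_j}$ is supported on the tangent graph, $\int_\mathcal{P}\psi_\eta\,d\widetilde\mu_{Y_j}=(\mathrm{vol}(Y_j))^{-1}\int_{Y_j}\eta$. Dually, at $o$ define a positive $(n-p,n-p)$-form $Z(V)$ by requiring $\alpha\wedge Z(V)=\psi_\alpha(o,V)\,dV_\X$ for every $(p,p)$-form $\alpha$; in a unitary coframe adapted to $V$ it is simply the wedge product of the positive $(1,1)$-forms dual to $V^\perp$. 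Let $\mm$ be the $G$-invariant $(n-p,n-p)$-form on $\X$ with $\mm_o=\int_\mathcal{O} Z(V)\,d\nu(V)$, where $\nu$ is the $K$-invariant probability measure on $\mathcal{O}=K\cdot T_o\Y$. By construction $\eta\wedge\mm=\phi_\eta\cdot dV_X$ pointwise, with $\phi_\eta(x)=\int_{\mathcal{O}_x}\psi_\eta(x,V)\,d\nu_x(V)$, so Fubini along $\mathcal{P}\to X$ yields $\int_\mathcal{P}\psi_\eta\,d\widetilde\mu_\mathcal{P}=(\mathrm{vol}(X))^{-1}\int_X\eta\wedge\mm$, which combined with the previous step produces the stated convergence.

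The form $\mm$ is closed because any $G$-invariant form on a symmetric space is parallel, and positive as an average of positive forms $Z(V)$; it depends only on the orbit $\mathcal{O}=K\cdot T_o\Y$, i.e.\ on the kind of the $Y_j$'s, up to the normalization of $\nu$ (the positive constant indeterminacy in the statement). For the non-vanishing assertion in the divisor case $p=n-1$, $\mm$ is a $G$-invariant positive $(1,1)$-form, hence a non-negative linear combination of the K\"ahler forms of the irreducible de Rham factors of $\X$. I would deduce $\mm|_{Y_j}\not\equiv 0$ from the standing hypothesis that $\Y$ is not a local factor, which forces $T\Y$ to project nontrivially onto each de Rham factor on which $\mm$ receives a positive contribution. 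The main obstacle I anticipate is precisely this last positivity analysis in the reducible case, together with making rigorous the claim that the Benoist--Quint result underlying Theorem~\ref{ratner} provides convergence on $\G\backslash G$ before being pushed forward to $X=\G\backslash G/K$.
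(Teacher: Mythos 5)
Your architecture matches the paper's: lift to the Grassmann bundle $\G\backslash G/(K\cap S_\Y)$, replace the pairing $\int_{Y_j}\eta$ by integration of a scalar ($\varphi_\eta$ in the paper, your $\psi_\eta$) against the lifted measures, invoke the $\G\backslash G$-level convergence from the proof of Theorem~\ref{ratner}, average over $K$ at a point to obtain a $K$-invariant (hence $G$-invariant) form $\Psi$, and set $\mm=\star\Psi$. Your pointwise construction via $Z(V)$ and $\nu$ is exactly the dual picture of the paper's $\psi_\eta(g)=\int_K\varphi_\eta(gk)\,dk$ followed by representing $\eta\mapsto\psi_\eta(e)$ by a form; these are the same computation. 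Closedness by parallelism/odd-degree-vanishing and positivity as an average of decomposable positive forms agree with the paper.

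Of your two flagged concerns, only one is a real gap. The first--whether the Benoist--Quint convergence really happens on $\G\backslash G$ and descends to $\G\backslash G/L$--is already settled by the proof of Theorem~\ref{ratner}: the measures $\mu_j$ there live on $\G\backslash G$, converge to Haar, and pushing forward under $\G\backslash G\to\G\backslash G/L$ for any compact $L\le K$ is automatic (Remark~\ref{convsub}). There is a slight subtlety you elide, but it is internal to Theorem~\ref{ratner}'s proof: the $S_\Y$-invariant measure need not be $H$-ergodic, so the paper replaces $S_\Y$ by $\bar M_j H$ before applying \cite{BQ3}, and separately checks that both measures have the same pushforward to the Grassmann bundle.

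The second concern--non-vanishing of $\mm|_{Y_j}$ in the reducible divisor case--is a genuine gap, and your heuristic about $T\Y$ projecting onto de Rham factors is not by itself a proof. What you need is a structure theorem for totally geodesic divisors in a product of irreducible Hermitian symmetric spaces. The paper proves this as Fact~\ref{div}, relying on Kollross's classification of maximal totally geodesic submanifolds (\cite[Thm.~3.4 and Cor.~3.5]{Kollross}): when $\Y$ is not a factor, either $\Y=\Y_i\times\prod_{j\ne i}\X_j$ with $\Y_i$ a divisor in $\X_i$ (so only $a_i>0$ and $\om_i|_{\Y_i}\ne 0$ because the non-factor hypothesis forces $n_i\ge 2$), or $\Y$ is a diagonal disc in a product of two discs (so exactly two $a_i$'s are positive and both $\om_i$'s restrict non-trivially). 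Your proposed argument would essentially have to reproduce this case analysis; without it the claim that "$T\Y$ projects nontrivially onto each factor receiving a positive contribution" is unsupported, since identifying which $a_i$ are positive already requires knowing how $\Y$ sits among the $\X_i$.
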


Since our initial interest was in $2$-ball quotients, we underline that ball quotients $X$ satisfying
the assumptions of this corollary exist: the arithmetic 
manifolds whose fundamental groups are the so-called uniform 
lattices of type I in the automorphism group $\PU(n,1)$ of the $n$-ball     
are examples of manifolds supporting infinitely many complex totally geodesic subvarieties of dimension $k$
for each $1\leq p<n$, not all contained in a proper totally geodesic
subvariety. Moreover any complex $p$-dimensional totally geodesic subvariety of $X$ is itself a
quotient of the $p$-ball and, as already mentioned, is not a local factor in $X$ (because the
$n$-ball is a rank 1 symmetric space).  

In the case of $2$-ball quotients, the form $\mm$ of Corollary~\ref{ratnercurrents} is proportional
to the Kähler form induced by the unique (up to a positive constant) $\SU(2,1)$-invariant
Kähler form on the ball ${\mathbb H}^2_\C$. In the case of quotients of the bidisc, and if $\om$
denotes the unique (up to a positive constant) 
$\SU(1,1)$-invariant Kähler form on ${\mathbb H}^1_\C$, $\mm$ is proportional to the Kähler form induced by
the $\SU(1,1)\times\SU(1,1)$-invariant form $\om_1+\om_2$ on the bidisc ${\mathbb
  H}^1_\C\times {\mathbb H}^1_\C$, where $\om_1$,
resp. $\om_2$, means $\om$ on the first, resp. second, factor. See Section~\ref{proofRC}.

\medskip

\noindent{\em Acknowledgments.} We are very indebted to Jean-Fran\c cois Quint for explaining to us
his work with Yves Benoist and for numerous conversations on related subjects. We also thank
Ngaiming Mok who pointed out that our main theorem implies the second assertion of
Theorem~\ref{negativecurves}  
and Bruno Duchesne for helpful discussions.

\section{Preliminary results}

For the convenience of the reader, we prove here some more or less well-known and/or easy facts that
will be used in the rest of the paper. 

\medskip

As in the introduction $\X$ is a symmetric space of the noncompact type,  $G$ is the connected
component of the isometry group of $\X$ (it is therefore a semisimple 
real Lie group without compact factor and the connected component of the real points of a
  semisimple algebraic 
group defined over $\R$), $o\in\X$ is a 
fixed origin, $K< G$ is 
the isotropy group of $G$ at $o$, so that  $K$ is a maximal compact subgroup of $G$ and $\cal X=G/K$.

We write $\gg=\kg\oplus\pg$ for the Cartan
decomposition of the Lie algebra $\gg$  of $G$ given by the geodesic symmetry $s_o$ around
$o\in\X$.

We let $\Y\subset \X$ be a symmetric subspace of the noncompact type containing the point $o$. Its
tangent space at $o$ can be identified with a Lie triple system $\qg\subset\pg$, so that setting
$\lg:=[\qg,\qg]\subset\kg$, $\hg:=\lg\oplus\qg$ is a semisimple Lie subalgebra of $\gg$. The
corresponding connected Lie subgroup $H$ of $G$ is semisimple, without compact factor, has finite
center, and its orbit through $o$ is $\Y$. Let $Z_G(H)$ be the centralizer of $H$ in $G$.

\def\qg{\mathfrak q}
\def\ql{\mathfrak l}
\def\qm{\mathfrak m}
\def\ug{\mathfrak u}
\def\sg{\mathfrak s}
\def\tg{\mathfrak t}

\begin{fact}\label{stab} Let $S_0$ be the connected component of the stabilizer $S$ of ${\cal Y}$ in
  $G$. Then  $S_0=HU$ where $U$ is the connected component of $Z_G(H)\cap K$. 
\end{fact}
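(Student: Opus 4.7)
The plan is to show that $S_0$ and $HU$ share the same Lie algebra; since $H$ and $U$ commute (as $U\subset Z_G(H)$), the product $HU$ is a connected Lie subgroup, and it is clearly contained in $S$: indeed $H\cdot o=\mathcal{Y}$ shows $H\subset S$, while $U\subset K$ fixes $o$ and commutes with $H$, so it stabilizes $H\cdot o=\mathcal{Y}$. Equality of Lie algebras will then force $S_0=HU$.

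To compute $\mathfrak{s}:=\mathrm{Lie}(S)$, I first note that the geodesic symmetry $s_o$ preserves the totally geodesic submanifold $\mathcal{Y}$, hence lies in $S$, and conjugation by $s_o$ realizes the Cartan involution $\theta$ on $\mathfrak{g}$. Thus $\mathfrak{s}$ is $\theta$-stable and splits as $(\mathfrak{s}\cap\mathfrak{k})\oplus(\mathfrak{s}\cap\mathfrak{p})$. For $X\in\mathfrak{p}\cap\mathfrak{s}$, the orbit $\exp(tX)\cdot o$ is a geodesic of $\mathcal{X}$ forced to lie in $\mathcal{Y}$, whence $X\in T_o\mathcal{Y}=\mathfrak{q}$; conversely $\mathfrak{q}\subset\mathfrak{h}\subset\mathfrak{s}$, so $\mathfrak{s}\cap\mathfrak{p}=\mathfrak{q}$. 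For $X\in\mathfrak{k}$, the element $\exp(tX)$ fixes $o$ and preserves $\mathcal{Y}$ exactly when its differential at $o$ preserves $T_o\mathcal{Y}$, i.e.\ $[X,\mathfrak{q}]\subset\mathfrak{q}$. Hence $\mathfrak{s}\cap\mathfrak{k}=N_\mathfrak{k}(\mathfrak{q}):=\{X\in\mathfrak{k}:[X,\mathfrak{q}]\subset\mathfrak{q}\}$.

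The crucial step is the decomposition $N_\mathfrak{k}(\mathfrak{q})=\mathfrak{l}\oplus(z_\mathfrak{g}(\mathfrak{h})\cap\mathfrak{k})$. Given $X\in N_\mathfrak{k}(\mathfrak{q})$, the Jacobi identity combined with $\mathfrak{l}=[\mathfrak{q},\mathfrak{q}]$ yields $[X,\mathfrak{l}]\subset\mathfrak{l}$, so $X$ normalizes $\mathfrak{h}=\mathfrak{l}\oplus\mathfrak{q}$, and $\ad(X)|_\mathfrak{h}$ is a derivation of the semisimple Lie algebra $\mathfrak{h}$. By Whitehead's lemma, $\ad(X)|_\mathfrak{h}=\ad(Y)|_\mathfrak{h}$ for a unique $Y\in\mathfrak{h}$. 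Since $X\in\mathfrak{k}$, $\ad(X)|_\mathfrak{h}$ commutes with $\theta|_\mathfrak{h}$, and a direct computation transfers this property to $\ad(Y)|_\mathfrak{h}$, forcing $\theta(Y)-Y\in z(\mathfrak{h})=0$; thus $Y\in\mathfrak{h}\cap\mathfrak{k}=\mathfrak{l}$ and $X-Y\in z_\mathfrak{g}(\mathfrak{h})\cap\mathfrak{k}$. Combined with $\mathfrak{s}\cap\mathfrak{p}=\mathfrak{q}$ and the fact that $\mathfrak{h}\cap z_\mathfrak{g}(\mathfrak{h})=z(\mathfrak{h})=0$, this identifies $\mathfrak{s}$ with $\mathfrak{h}\oplus(z_\mathfrak{g}(\mathfrak{h})\cap\mathfrak{k})=\mathrm{Lie}(HU)$, completing the argument.

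The main obstacle I anticipate is the derivation step: it critically exploits the fact that $\mathfrak{h}$ is semisimple with trivial center, which is precisely the hypothesis that $\mathcal{Y}$ is of the noncompact type. Without semisimplicity, derivations need not be inner and $N_\mathfrak{k}(\mathfrak{q})$ could be strictly larger than $\mathfrak{l}+(z_\mathfrak{g}(\mathfrak{h})\cap\mathfrak{k})$.
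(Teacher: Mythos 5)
Your proof is correct, and the preliminary steps (the $\theta$-stability of $\mathfrak{s}$ via the geodesic symmetry, the identifications $\mathfrak{s}\cap\mathfrak{p}=\mathfrak{q}$ and $\mathfrak{s}\cap\mathfrak{k}=N_\mathfrak{k}(\mathfrak{q})$) are the same as in the paper. Where you genuinely diverge is in the key algebraic step that splits $\mathfrak{s}\cap\mathfrak{k}$. The paper first observes, exactly as you do, that $\mathfrak{l}=[\mathfrak{q},\mathfrak{q}]$ is an ideal of $\mathfrak{m}:=\mathfrak{s}\cap\mathfrak{k}$ (so $\mathfrak{h}$ is an ideal of $\mathfrak{s}$), but then takes $\mathfrak{u}$ to be the Killing-form orthogonal complement of $\mathfrak{l}$ inside $\mathfrak{m}$ and uses $\mathrm{ad}$-invariance of the Killing form to show $\mathfrak{u}$ is an ideal of $\mathfrak{s}$, forcing $[\mathfrak{u},\mathfrak{h}]\subset\mathfrak{u}\cap\mathfrak{h}=0$, hence $\mathfrak{u}\subset z_\mathfrak{g}(\mathfrak{h})\cap\mathfrak{k}$. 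You instead invoke Whitehead's lemma: $\mathrm{ad}(X)|_\mathfrak{h}$ is a derivation of the semisimple algebra $\mathfrak{h}$, hence inner, hence equal to $\mathrm{ad}(Y)|_\mathfrak{h}$ for a unique $Y\in\mathfrak{h}$, and your $\theta$-equivariance argument pins $Y$ down in $\mathfrak{l}$ so that $X-Y\in z_\mathfrak{g}(\mathfrak{h})\cap\mathfrak{k}$. Both routes are standard and rely on the semisimplicity of $\mathfrak{h}$ in the same essential way (you through ``derivations are inner, $z(\mathfrak{h})=0$'', the paper through nondegeneracy of the Killing form and the ideal structure); the paper's is marginally more self-contained, while yours produces the decomposition $N_\mathfrak{k}(\mathfrak{q})=\mathfrak{l}\oplus\bigl(z_\mathfrak{g}(\mathfrak{h})\cap\mathfrak{k}\bigr)$ a bit more directly, without the detour through orthogonal complements. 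Your closing remark correctly identifies semisimplicity (no center, no Euclidean factor for $\mathcal{Y}$) as the load-bearing hypothesis.
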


\begin{proof} 
If $u\in U=(Z_G(H)\cap K)_0$ then certainly $u\in S_0$ since $u_{|\Y}={\rm id}_\Y$. Hence $HU<S_0$.  

The group $\phi(S)$ is stable by the Cartan involution of ${\rm
  Isom}_0({\cal X})$ defined by conjugacy by the geodesic symmetry $s_o$ w.r.t. the point $o\in\cal Y$,
because ${\cal Y}$ being totally geodesic it is preserved by the symmetries w.r.t. its points. 
Therefore the Lie algebra $\mathfrak s$ of $S$ is stable under the corresponding Cartan involution
of $\gg$. 
Hence we have $\sg=\mg\oplus\qg$, where $\mg=\sg\cap\kg$ is a subalgebra of $\kg$ containing 
$\lg$ and 
$\qg=\hg\cap\pg=\sg\cap\pg$ because $\qg$ can be identified with the tangent space at $o$ of the
orbit $\cal Y$ of $o$ under $H$ which is also the orbit of $o$ under $S$.   
The fact that $[\qg,\qg]=\lg$ and $[\mg,\qg]\subset\qg$ implies that $\lg$ is an ideal in $\mg$ (hence
$\hg$ is an ideal in $\sg$), which in
turn implies that the orthogonal $\ug$ of $\lg$ in $\mg$ for the Killing form of $\gg$ is an ideal
in $\mg$ 
hence in $\sg$. Therefore $\sg=\ug\oplus\hg$ is a direct sum of ideals. Now $\u$ is
included in the Lie algebra of $Z_G(H)\cap K$, hence the result.    
\end{proof}

\begin{fact}\label{cent}
 The following assertions are equivalent:

-- the symmetric subspace $\cal Y=H o\subset\X$ is not a factor;

-- the centralizer $Z_G(H)$ of $H$ in $G$ is a subgroup of $K$; 

-- the subgroup $H$ of $G$ has only one totally geodesic orbit in $\cal X$.  
\end{fact}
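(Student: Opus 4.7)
The strategy is to establish (1) $\Leftrightarrow$ (2) via a Lie-algebraic analysis of the centralizer $\zg := \mathfrak{z}_{\gg}(\hg)$, and then (1) $\Leftrightarrow$ (3) by noting that a nonzero element of $\zg \cap \pg$ produces a second totally geodesic $H$-orbit (the easy direction), and, conversely, that a second totally geodesic $H$-orbit furnishes such an element (the hard direction).

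For (1) $\Leftrightarrow$ (2), observe first that $\hg = \lg \oplus \qg$ is stable under the Cartan involution $\theta$, so the same is true of $\zg$, which decomposes as $(\zg \cap \kg) \oplus (\zg \cap \pg)$. Since $Z_G(H)$ is reductive, (2) is equivalent, at the level of connected components, to $\zg \cap \pg = 0$. If $0 \neq X \in \zg \cap \pg$, then $\qg \oplus \R X$ is a Lie triple system (because $[X, \qg] = 0$), and the invariance of the Killing form $B$ combined with $\hg = [\hg, \hg]$ yields $X \perp \qg$; the map $(y, t) \mapsto \exp(tX) \cdot y$ is then a totally geodesic isometric embedding $\Y \times \R \hookrightarrow \X$, so $\Y$ is a factor. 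Conversely, if $\Y$ is a factor with unit tangent $X \in \pg$ along the $\R$-direction at $o$, the vanishing of the sectional curvature $K(X, Y) = 0$ for $Y \in \qg$ translates, via $R(X, Y) Y = -[[X, Y], Y]$ and the negative definiteness of $B$ on $\kg$, into $[X, Y] = 0$; then $[X, \lg] = [X, [\qg, \qg]] = 0$ by Jacobi, so $X \in \zg \cap \pg$.

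The implication $\neg$(2) $\Rightarrow$ $\neg$(3) is now immediate: if $0 \neq X \in \zg \cap \pg$, the isometry $\exp(X)$ commutes with $H$ and carries $o$ off $\Y$ (since $X \perp \qg$), so $\exp(X) \cdot \Y = H \cdot (\exp(X) \cdot o)$ is a second totally geodesic $H$-orbit.

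The main obstacle is $\neg$(3) $\Rightarrow$ $\neg$(2). Let $\Y' \neq \Y$ be another totally geodesic $H$-orbit. Distinct orbits are disjoint, and $\Y, \Y'$ are closed convex $H$-transitive subsets of the CAT(0) space $\X$, so the $H$-invariant continuous function $d(\cdot, \Y')$ is constant on $\Y$, equal to some $c > 0$. The unique closest point of $\Y'$ to $o$ has the form $p = \exp(Z) \cdot o$ with $Z \in \pg$, $Z \perp \qg$ and $\|Z\| = c$; since $H \cap K$ fixes $o$ and preserves $\Y'$, uniqueness forces $H \cap K$ to fix $p$, which on passing to Lie algebras gives $[\lg, Z] = 0$. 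To conclude $[\qg, Z] = 0$, I would use, for each $Y \in \qg$, the variation $\gamma_s(t) = \exp(sY) \exp(tZ) \cdot o$: a family of geodesics of constant length $c$ joining $\exp(sY) \cdot o \in \Y$ to $\exp(sY) \cdot p \in \Y'$, perpendicular at both endpoints by total geodesicity of $\Y$ and $\Y'$. The second-variation formula (with boundary terms vanishing) together with the non-positive curvature then forces the variation field $V$, with $V(0) = Y$, to be a parallel Jacobi field satisfying $R(V, \dot\gamma_0) \dot\gamma_0 = 0$; evaluating at $t = 0$ yields $[[Y, Z], Z] = 0$, and the Killing-form argument above again gives $[Y, Z] = 0$. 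Hence $0 \neq Z \in \zg \cap \pg$, contradicting (2).
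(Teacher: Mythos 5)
Your proof is correct, but it is organized differently from the paper's and gets there by a more computational route, so a brief comparison is worth making. The paper establishes the equivalence by a single cycle $(\neg 1)\Rightarrow(\neg 2)\Rightarrow(\neg 3)\Rightarrow(\neg 1)$, whereas you prove $(1)\Leftrightarrow(2)$ and $(2)\Leftrightarrow(3)$; with your cleaner infinitesimal characterization of $(\neg 2)$ as $\zg_{\gg}(\hg)\cap\pg\neq 0$ in hand, the implication $(\neg 2)\Rightarrow(\neg 3)$ becomes trivial (take $\exp(X)o$), which lets you avoid the Clifford-translation / splitting argument the paper uses when reasoning with an arbitrary element $z\in Z_G(H)\setminus K$. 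The main divergence is in the hard step: the paper proves $(\neg 3)\Rightarrow(\neg 1)$ by noting that $d(\cdot,\Y')$ is convex and bounded (hence constant) on $\Y$ and then invoking the flat-strip/sandwich lemma of Bridson--Haefliger (Chap.\ II.2) to conclude directly that the convex hull of the two orbits is $\Y\times[0,a]$, so $\Y$ is a factor; you instead prove $(\neg 3)\Rightarrow(\neg 2)$ by choosing the common perpendicular $Z$, isolating $[\lg,Z]=0$ from the $(H\cap K)$-equivariance of nearest-point projection, and then running a second-variation-of-energy argument on the family $\gamma_s(t)=\exp(sY)\exp(tZ)o$ to obtain $[\qg,Z]=0$. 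In effect you are re-deriving the flat-strip phenomenon from scratch in the symmetric-space setting; that costs a longer argument (and makes you verify the vanishing of the boundary terms, which here works only because $\Y,\Y'$ are totally geodesic), but it buys an explicit nonzero element of $\zg_\gg(\hg)\cap\pg$, which is what you wanted. One stylistic remark: you can shorten the endgame of the second-variation step, since the pointwise vanishing of $\langle R(V,\dot\gamma)\dot\gamma,V\rangle$ already gives $[V,\dot\gamma]=0$ (via $\langle R(V,\dot\gamma)\dot\gamma,V\rangle = B([V,\dot\gamma],[V,\dot\gamma])\le 0$ on $\kg$), without first passing through $R(V,\dot\gamma)\dot\gamma=0$ and $[[Y,Z],Z]=0$.
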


\begin{proof} 
Suppose first that $\cal Y$ is a factor. This means that there exists a totally geodesic isometric embedding
$f:\cal Y\times\R\fd\cal X$ such that $f(y,0)=y$ for all $y\in\Y$. Hence there exists
$v$ of unit norm in the
orthogonal complement $\qg^\perp$ of $\qg$ in $\pg$ such that $[v,\qg]=0$ (geometrically, and if we
identify $\pg$ with $T_o\cal X$ and $\qg$ with $T_o\cal Y$, 
$-\|[v,x]\|^2$ is the sectional curvature of the 2-plane generated by two orthonormal vectors $x$
and $v$. If $x\in\qg$, 
this is zero because $x$ and $v$ belong to different factors of a Riemannian product). Since
$\hg=[\qg,\qg]\oplus\qg$, $v$ commutes with $\hg$. Hence $H$ commutes with the noncompact
1-parameter subgroup 
of transvections along the geodesic defined by $v$. 

Assume now that the connected component $Z_G(H)$ of the centralizer of $H$ in $G$ is not
included in $K$ and let us prove that $H$ has (at least) two distinct, hence disjoint, totally
geodesic orbits $\cal Y$ and $z\cal Y=zHo=Hzo$ for some $z\in Z_G(H)$. Let indeed $z\in Z_G(H)$ and
suppose by contradiction that $z\Y\cap \Y\neq\emptyset$, i.e. there exists
$y_0\in\Y$ such that   
$zy_0\in\Y$. Then $z$ stabilises $\Y=H y_0$ and, if $d$ is the distance in $\X$, for all $h\in H$, we have
$d(hy_0,zhy_0)=d(hy_0,hzy_0)=d(y_0,zy_0)$, which means that $y\mapsto d(y,zy)$ is constant on
$\Y$, equal to $t_z$ say. If $t_z>0$, $z$ acts on $\Y$
as a non trivial Clifford translation. This implies that $\Y$ splits a line, that is $\Y$ is
isometric to a product $\cal Z\times\R$, see e.g.~\cite[p. 235]{BH}. This is not possible since $H$ is
semisimple. Hence $t_z=0$, 
so that $z$ fixes $\Y$ pointwise and belongs to $K$.

Finally assume that $H$ has 
two distinct totally geodesic orbits $\cal Y=Ho$ and $\cal Y'=Hgo$ for some $g\in G$. Then, 
$d$ being  the distance in $\X$, the
function $x\mapsto d(x,\cal Y')$ is convex on $\X$ because $\cal Y'$ is
totally geodesic. Its restriction to $\cal Y$  is bounded by $d(o,go)$, hence it is constant, equal
to $a$ say, 
because $\cal Y$ is also totally geodesic. Therefore the convex hull of these two orbits is isometric to
$\cal Y\times [0,a]$ and $\cal Y$ is a factor. See e.g. ~\cite[Chap. II.2]{BH}.
\end{proof}

\begin{fact}\label{geod}
Assume that $Z_G(H)$ is compact and that $L$ is a connected Lie subgroup of $G$ containing $H$. Then
$L$ has a totally geodesic orbit in $\X$.
\end{fact}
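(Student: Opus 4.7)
The plan is to apply Mostow's theorem on reductive subgroups of semisimple Lie groups: any such subgroup is conjugate in $G$ to one whose Lie algebra is stable under the Cartan involution $\theta$ of $\gg$ associated with $o$. If $\Ad(g)\mathfrak{l}$ is $\theta$-stable for some $g\in G$, then $\mathfrak{l}$ is stable under the Cartan involution at the point $g^{-1}o\in\X$, which is equivalent to saying that the $L$-orbit $L\cdot(g^{-1}o)$ is a totally geodesic submanifold of $\X$. The crux of the proof is therefore to establish that $\mathfrak{l}$ is a reductive Lie subalgebra of $\gg$.

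Let $\mathfrak{r}$ denote the radical of $\mathfrak{l}$, a solvable ideal. Since $\hg\subset\mathfrak{l}$ acts on $\mathfrak{r}$ via $\ad$ and $\hg$ is semisimple, this action is completely reducible, so one can decompose $\mathfrak{r}=(\mathfrak{r}\cap\zg_\gg(\hg))\oplus\mathfrak{r}'$, where $\mathfrak{r}'$ is the sum of the non-trivial $\hg$-isotypic components of $\mathfrak{r}$. By Fact~\ref{cent}, the compactness of $Z_G(H)$ yields $\zg_\gg(\hg)\subset\kg$; hence $\mathfrak{r}\cap\zg_\gg(\hg)$ is a solvable subalgebra of the compact algebra $\kg$ and so is abelian.

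The key step is to show that $\mathfrak{r}'=0$. The plan is to argue that a non-trivial irreducible $\hg$-subrepresentation of $\gg$ cannot lie inside a solvable ideal of $\mathfrak{l}$: for any $X\in\mathfrak{r}$, its semisimple part $X_s$ in the Jordan decomposition in $\gg$ must itself lie in $\mathfrak{r}$ and, by Lie's theorem applied to the solvable ideal, act triangularly on $\mathfrak{r}$; combined with the fact that $\hg$ acts on $\mathfrak{r}$ by derivations, this forces $X_s$ (hence $X$ modulo nilpotents) to centralize $\hg$, which contradicts belonging to a non-trivial $\hg$-isotypic component. Consequently $\mathfrak{r}\subset\zg_\gg(\hg)\subset\kg$, and a parallel isotypic analysis on $\mathfrak{l}$ itself shows that $\mathfrak{r}$ is actually central in $\mathfrak{l}$; thus $\mathfrak{l}$ is reductive and Mostow's theorem provides the desired conjugate, and hence the totally geodesic $L$-orbit.

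The hard part is the structural elimination of non-trivial $\hg$-isotypic components from the radical $\mathfrak{r}$: it hinges on a careful interplay between the Jordan decomposition in the ambient semisimple $\gg$, the triangularizability forced by solvability, and the non-compactness of $\hg$—this is precisely where the hypothesis that $Z_G(H)$ is compact (equivalently, $\zg_\gg(\hg)\subset\kg$) is genuinely used, since it prevents the ``mixing'' of $\hg$-isotypic components between $\kg$ and $\pg$ that would otherwise allow a solvable $\hg$-submodule to survive in the radical.
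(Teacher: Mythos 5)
The key step in your proposal---showing that the non-trivial $\hg$-isotypic part $\mathfrak r'$ of the radical vanishes---is where the argument breaks down, and the sketch you give does not close the gap. There are two separate problems. First, if $\mathfrak l$ is not known to be algebraic (and this is precisely what is at stake), there is no reason for the Jordan part $X_s$ of an element $X\in\mathfrak r$ to lie in $\mathfrak r$ again; Jordan decomposition is computed in the ambient $\gg$, and a non-algebraic subalgebra need not be stable under it (think of the line spanned by a generic element of $\mathfrak{sl}_2(\R)$). Second, and more fundamentally, even granting that $X_s$ centralizes $\hg$, this says nothing about $\mathfrak r'=0$: a nilpotent element $X$ (with $X_s=0$) certainly has $X_s$ centralizing $\hg$, yet $X$ itself can perfectly well lie in a non-trivial isotypic component. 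The standard counterexample without the compactness hypothesis is a parabolic subalgebra $\mathfrak l=\mathfrak p$ with $\hg$ inside a Levi factor: there the nilradical of $\mathfrak p$ sits inside the radical of $\mathfrak l$ and is a non-trivial $\hg$-module consisting of nilpotents. Your argument does not engage with this configuration at all, so it does not in fact ``genuinely use'' the compactness of $Z_G(H)$ in the way you assert.

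The paper's route is different and this difference is the whole point. Rather than analyzing isotypic components, it reduces everything to a single claim: $L$ normalizes no non-zero Lie subalgebra of $\gg$ consisting of nilpotent elements. Granting this, $[\mathfrak r,\mathfrak r]$ (whose elements are nilpotent by Lie's theorem) must vanish, so $\mathfrak r$ is abelian; then a direct Jordan--Chevalley computation (possible because $\mathfrak r$ is now abelian) shows the set $\mathfrak r_n$ of nilpotent parts is a subalgebra normalized by $L$, hence also zero; this makes $\mathfrak r$ abelian with semisimple elements, hence central, giving the decomposition $\mathfrak l=\mathfrak s\oplus\mathfrak a$. The claim itself is then established via the Borel--Tits theorem: such a normalized unipotent subgroup would force $H\subset L\subset N_G(U)$ to sit inside a proper parabolic $P$, and by Varadarajan's result $\hg$ lands in a Levi factor $\mathfrak m$ of $P$, but then $Z_G(\mathfrak m)\subset Z_G(H)$ is non-compact (it contains the split torus of $P$), contradicting the hypothesis. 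That Borel--Tits step is exactly the ingredient missing from your argument, and it is where the compactness of $Z_G(H)$ is actually used. You would also need to address algebraicity (not just reductivity) of $\mathfrak l$ before invoking Mostow or Borel--Harish-Chandra, which the paper does by observing that the central part $\mathfrak a$ is a compact subalgebra.
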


\begin{proof}
It is enough to show that the Lie algebra $\lg$ of $L$ is stable by a Cartan involution of
$\gg$. By~\cite[Lemma~1.5]{BHC}, since $G$ is a connected linear semisimple Lie group and is
therefore the connected component of the real points ${\mathbf G}(\R)$ of an algebraic group
${\mathbf G} $ defined over $\R$, it
suffices to prove 
that $\lg$ is reductive and algebraic in $\gg$. We are going to show that $\lg=\sg\oplus\ag$, where
$\sg$ and $\ag$ are ideals of $\lg$, $\sg$ is semisimple, $\ag$ is abelian and all the
elements of $\ag$ are semisimple for the adjoint action of $\ag$ on $\gg$. This will imply that
$\lg$ is reductive in $\gg$, i.e.\ that the adjoint action of $\lg$ on $\gg$ is semisimple. Since
moreover in this case $\ag\subset Z_\gg(\lg)\subset Z_\gg(\hg)$ is a compact subalgebra, $\lg$ is 
indeed algebraic.       

\smallskip

The desired decomposition $\lg=\sg\oplus\ag$ will be established if we prove that $L$ does not
normalize any Lie subalgebra of $\gg$ containing only nilpotent elements. 

Indeed, assuming the latter, let $\rg$ be the radical of $\lg$. First, $[\rg,\rg]$ must be trivial 
  since it only contains nilpotent elements and it is normalized by $L$. Therefore $\rg$ is abelian. Now,
  we note that if $r=r_s+r_n$ and $r'=r'_s+r'_n$ are elements of $\rg$ written in terms of their
  Jordan-Chevalley decomposition, then we also have $[r_n,r'_n]=0$. Indeed,
  $[r_s,r']+[r_n,r']=[r,r']=0$. As ${\rm ad}(r_s)$ and ${\rm ad}(r_n)$ are polynomials in ${\rm
    ad}(r)$, and since ${\rm ad}(r_n)$ is nilpotent, we must have $[r_s,r']=[r_n,r']=0$. Reasoning
  in the same way with $[r_n,r']$, we see in particular that $[r_n,r'_n]=0$. 

Then $\rg_n:=\{r_n\,|\,r\in\rg\}$ is a subalgebra of $\gg$ containing only nilpotents elements and it is normalized by
$L$ since for any $g\in G$ and any $r\in\rg$, $\bigl({\rm Ad}(g)(r)\bigr)_n={\rm Ad}(g)(r_n)$. 
As a consequence, $\rg_n$ is trivial, i.e. $\rg$ is abelian and only contains semisimple
elements. Finally, as $\rg$ is an ideal in $\lg$, its adjoint action on $\lg$ is nilpotent hence
trivial, i.e.\ $\rg$ is central in $\lg$.

\smallskip

To conclude, assume for the sake of contradiction that $L$
normalizes a non trivial Lie subalgebra $\ug$ which only contains nilpotent elements, and let $U$ be
the unipotent subgroup of $G$ whose Lie algebra is $\ug$. Then $U$ is the set of real points of a connected
algebraic unipotent subgroup $\bf U$ of $\bf G$ defined over $\R$.
By~\cite[Corollaire 3.9]{BT}, there exists 
a proper parabolic subgroup $\bf P$ of $\mathbf G$ defined over $\R$ whose
unipotent radical we denote by $\bf R$, such that 
$\bf U\subset \bf R$ and $N_{\bf G}(\bf U)\subset \bf P$ where $N_{\bf G}(\bf U)$ is the normalizer
of $\bf U$ in $\bf G$. In
particular, we have $H\subset L\subset N_{G}(U)\subset N_{\bf G}(\bf U)(\R)\subset \bf P(\R)$. 

Now, by~\cite[Corollary 3.14.3]{V} there exists a Levi factor $\mg$ of the Lie algebra of $P:=\bf P(\R)$ which
contains the Lie algebra $\hg$ of $H$. 
Since $P$ is a proper parabolic subgroup of $G$, $Z_G(\mg)$ is noncompact which is impossible since it is
contained in $Z_G(H)$. 
\end{proof}

\begin{fact}\label{finite}
Up to the action of the stabilizer $K$ of the point $o\in\X$, there are only
finitely many symmetric subspaces of the noncompact type in $\X$ passing through $o$. In
particular, up to the action of its isometry group, a symmetric space of the noncompact type  admits
only 
finitely many symmetric subspaces of the noncompact type. As we said in the introduction,
the orbit of a symmetric subspace $\Y$   of $\X$ under $G$ 
  is called the {\em kind} of $\Y$.
\end{fact}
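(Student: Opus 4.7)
The plan is to reduce the statement to a finiteness problem for semisimple Lie subalgebras of $\gg$ and then to invoke Dynkin's classical classification.

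First I would use the correspondence recalled just before Fact~\ref{stab}: via $\Y \mapsto T_o\Y$, symmetric subspaces of the noncompact type through $o$ are in $K$-equivariant bijection with Lie triple systems $\qg \subset \pg$ for which $\hg := \qg \oplus [\qg,\qg]$ is a semisimple Lie subalgebra of $\gg$ without compact factor. Such $\hg$ is automatically $\t$-stable and satisfies $\qg = \hg \cap \pg$, so counting $K$-orbits of symmetric subspaces of the noncompact type through $o$ is equivalent to counting $K$-orbits of $\t$-stable semisimple Lie subalgebras $\hg \subset \gg$ without compact factor.

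Next I would invoke Dynkin's theorem, according to which a real semisimple Lie algebra contains only finitely many $\Ad(G)$-conjugacy classes of semisimple Lie subalgebras. It then remains to check that each such $G$-orbit contains a single $K$-orbit of $\t$-stable representatives. Suppose $\hg_2 = \Ad(g)\hg_1$ with both subalgebras $\t$-stable, and write the polar decomposition $g = k \exp(X)$ with $k \in K$ and $X \in \pg$. Applying the global Cartan involution $\Theta$ of $G$ (which fixes $K$ pointwise and acts by $\exp Y \mapsto \exp(-Y)$ on $\exp(\pg)$) to the equality $\Ad(g)\hg_1 = \hg_2$ yields $\Ad(\Theta(g))\hg_1 = \hg_2$, so that $\exp(2X) = \Theta(g)^{-1} g$ lies in $N_G(\hg_1)$. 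Since $\ad X$ is self-adjoint with respect to $-B(\cdot,\t\cdot)$ and $\exp(2\ad X)$ preserves $\hg_1$, a standard spectral argument (expanding via the projectors onto the eigenspaces of $\ad X$) forces $\ad X$ itself to preserve $\hg_1$, so $X \in N_\gg(\hg_1)$. For semisimple $\hg_1$, $N_\gg(\hg_1) = \hg_1 \oplus Z_\gg(\hg_1)$, and both summands are $\t$-stable; decomposing $X = X_1 + X_2$ accordingly, $\exp(X_1) \in H_1$ and $\exp(X_2) \in Z_G(\hg_1)$ both preserve $\hg_1$ under conjugation, and we conclude $\hg_2 = \Ad(k)\hg_1$ with $k \in K$.

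The main obstacle is Dynkin's classification, which I treat as a black box; the rest of the argument is a standard manipulation with the polar decomposition of $G$ and with the structure of the normalizer of a semisimple subalgebra of $\gg$.
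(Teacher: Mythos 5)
Your argument is correct, and the overall architecture (reduce to finiteness of conjugacy classes of semisimple subalgebras, then upgrade $G$-conjugacy of $\theta$-stable subalgebras to $K$-conjugacy) is the same as the paper's. But the two proofs diverge in the upgrade step. The paper works \emph{geometrically}: writing $\hg' = \Ad(g^{-1})\hg_j$ with $\hg_j$ a $\theta$-stable representative, it observes that $H_j$ then has the two totally geodesic orbits $H_j o$ and $H_j go$, and the convexity argument from the proof of Fact~\ref{cent} (convex hull of the two orbits is a flat strip $\Y_j \times [0,a]$) produces a transvection $z \in Z_G(H_j)$, $h \in H_j$ and $k \in K$ with $g = hzk$, whence $\hg' = \Ad(k^{-1})\hg_j$. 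Your proof works \emph{algebraically}: polar decomposition $g = k\exp X$, the Cartan involution trick $\Theta(g)^{-1}g = \exp(2X) \in N_G(\hg_1)$, and the spectral argument (self-adjointness of $\ad X$ with respect to $-B(\cdot,\theta\cdot)$) forcing $X \in N_\gg(\hg_1) = \hg_1 \oplus Z_\gg(\hg_1)$. Both routes are standard and valid; the paper's fits the surrounding context, which has already built up the geometric machinery of Fact~\ref{cent}, whereas your argument is self-contained and closer to the proof of Mostow's conjugacy theorem for Cartan involutions.

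Two small remarks. First, the finiteness ingredient you invoke should be attributed to Richardson for real Lie algebras (the paper cites~\cite[Prop.~12.1]{Richardson}); Dynkin's theorem is the complex case, and passing from it to the real case requires a Galois-cohomological argument that is not entirely trivial. Second, your claimed bijection between symmetric subspaces of noncompact type and all $\theta$-stable semisimple subalgebras without compact factor is off by a constraint: the relevant $\hg$ must satisfy $\hg = (\hg\cap\pg) \oplus [\hg\cap\pg,\hg\cap\pg]$, since a $\theta$-stable semisimple $\hg$ can have $\hg\cap\kg$ strictly larger than $[\hg\cap\pg,\hg\cap\pg]$. This does not affect your conclusion, as you only need an injection from symmetric subspaces into (a subset of) $K$-classes of such $\hg$, but as stated the correspondence is not a bijection.
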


\begin{proof} 
This follows from the fact that there are only finitely many $G$-conjugacy classes of semisimple
subalgebras in the Lie algebra of a connected real Lie group $G$, see
e.g.~\cite[Prop.~12.1]{Richardson}.  Let us
indeed consider a totally geodesic subspace of the noncompact 
type $\Y'$  in the symmetric 
space of the noncompact type $\X$. Up to the action of the isometry group, we may assume that $\Y'$
contains the point $o$. Its tangent space at $o$ then identifies with a Lie 
triple system $\qg'$ of $\pg$ such that $\hg':=[\qg',\qg']\oplus\qg'$ is a semisimple subalgebra of the
Lie algebra $\gg$. Let $H'$ be the corresponding connected subgroup of isometries of $\X$. We have
$\Y'=H'o$. Let $\hg_1,\ldots,\hg_m$ be a system of representatives of the
conjugacy classes of semisimple subalgebras of $\gg$. We may assume that the $\hg_i$ are stable
under the Cartan involution given by the geodesic symmetry around the point $o$ so that if $H_i$ is
the connected subgroup of isometries corresponding to $\hg_i$, the orbit $\Y_i=H_i o$ is a totally
geodesic subspace of $\X$. 

We have $\hg'=g^{-1}\hg_j g$ for some isometry $g$ of $\X$ and some $1\leq j\leq m$, so that
$H'=g^{-1}H_jg$. Then the semisimple subgroup $H_j$ has two totally geodesic orbits $H_jo$ and
$H_j go$ in $\X$. It follows from the proof of Fact~\ref{cent} that there exist a transvection $z$ in the
centralizer of $H_j$, $h\in H_j$ and $k\in K$ such that $g=hzk$. Hence $H'=k^{-1}H_j k$ and
$\Y'=k^{-1}\Y_j$.  
\end{proof}

%\smallskip

\begin{fact}\label{div}
  Assume that ${\cal X}$ is a Hermitian symmetric space and that $\cal Y$ is a totally geodesic
  divisor of $\cal X$. Then either $\cal Y$ is not a factor or $\cal X={\cal Y}\times{\mathbb
    H}^1_\C$, where ${\mathbb H}^1_\C$ is the hyperbolic disc. Moreover, if $\Y$ is not a
    factor and $\X=\X_1\times\cdots\times\X_\ell$ is the decomposition of $\X$ in a product of
    irreducible Hermitian symmetric spaces, then either
    \begin{enumerate}
    \item there exist $i\in\{1,\ldots,\ell\}$ and a totally geodesic subspace $\Y_i$ of $\X_i$ such that
      $\Y=\Y_i\times\prod_{j\neq i}\X_j$, or
\item there exists $i\neq j$ in $\{1,\ldots,\ell\}$ and an isometry (up to scaling) $\varphi:
  \X_i\pfd\X_j$ such that 
  $\Y=\{(x,\varphi(x))\mid x\in\X_i\}\times\prod_{k\neq i,j}\X_k$.  
    \end{enumerate}
For dimensional reasons, in the first case $\Y_i$ is a divisor in $\X_i$ and in the second case
$\X_i$ and $\X_j$ are both isometric (up to scaling) to the hyperbolic disc ${\mathbb H}^1_\C$.
  \end{fact}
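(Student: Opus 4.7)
Let $V := T_o\Y \subset \pg$ be the complex Lie triple system attached to $\Y$; it is $J$-invariant and of complex codimension one. Write $\X = \X_1 \times \cdots \times \X_\ell$ for the decomposition into irreducible factors, so that $\gg = \gg_1 \oplus \cdots \oplus \gg_\ell$ and $\pg = \pg_1 \oplus \cdots \oplus \pg_\ell$, with $[\pg_i, \pg_j] = 0$ whenever $i \ne j$. Denote by $\pi_i$ the projection onto the $i$-th factor and set $V_i := \pi_i(V) \subset \pg_i$. Since $V \subset V_1 \oplus \cdots \oplus V_\ell$ and $\dim_\C V = \dim_\C \pg - 1$, one gets $\sum_i (\dim_\C \pg_i - \dim_\C V_i) \leq 1$, and I would split the analysis into two cases: either exactly one $V_{i_0}$ is a proper subspace of $\pg_{i_0}$ (in which case it has complex codimension one and $V_j = \pg_j$ for $j \ne i_0$), or $V_i = \pg_i$ for all $i$.

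In the first case a dimension count forces $V = V_{i_0} \oplus \bigoplus_{j \ne i_0} \pg_j$; since $[\pg_j, \pg_k] = 0$ for $j \ne k$, the subspace $V_{i_0} = V \cap \pg_{i_0}$ is itself a Lie triple system, integrating to a totally geodesic divisor $\Y_{i_0}$ of $\X_{i_0}$, and $\Y = \Y_{i_0} \times \prod_{j \ne i_0} \X_j$. When $\dim_\C \X_{i_0} \geq 2$, this is conclusion (1), and a direct computation (using that a positive-dimensional totally geodesic divisor of an irreducible Hermitian symmetric space cannot itself be a factor of that irreducible space) shows $Z_G(H)$ is compact, so $\Y$ is not a factor. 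When $\dim_\C \X_{i_0} = 1$, we have $\X_{i_0} = \Ho$ and $\Y_{i_0}$ is a point, yielding $\X = \Y \times \Ho$; this is precisely the alternative in which $\Y$ is a factor.

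In the second case I would study the Lie subalgebra $\hg := V \oplus [V, V] \subset \gg$: each projection $\pi_i(\hg)$ contains $V_i = \pg_i$ and $\pg_i$ generates $\gg_i$, so $\pi_i(\hg) = \gg_i$. A Goursat-type classification of subalgebras of $\gg_1 \oplus \cdots \oplus \gg_\ell$ with surjective projection onto each simple factor then produces a partition $I_1 \sqcup \cdots \sqcup I_r = \{1, \ldots, \ell\}$, representatives $i_k \in I_k$, and Lie isomorphisms $\psi_{i_k, i} : \gg_{i_k} \to \gg_i$ (with $\psi_{i_k, i_k} = \id$), such that $\hg = \bigoplus_k \hg^k$ where $\hg^k := \{(\psi_{i_k, i}(x))_{i \in I_k} : x \in \gg_{i_k}\}$. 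Because $V = \hg \cap \pg$ still surjects onto every $\pg_i$, each $\psi_{i_k, i}$ must send $\pg_{i_k}$ onto $\pg_i$, i.e.\ preserve the Cartan decompositions. A dimension count then yields $\sum_k (|I_k| - 1) \dim_\C \pg_{i_k} = 1$, which, since $\dim_\C \pg_i \geq 1$ for every non-trivial factor, admits the unique solution: one block $I_{k_0}$ has size two with both corresponding $\X_i$ and $\X_j$ isometric to $\Ho$, and all other blocks are singletons contributing the full $\gg_{i_k}$ to $\hg$. The Cartan-preserving isomorphism $\psi_{i, j}$ integrates (up to scaling) to an isometry $\varphi : \X_i \to \X_j$, and one obtains $\Y = \{(x, \varphi(x)) : x \in \X_i\} \times \prod_{k \ne i, j} \X_k$, which is conclusion (2).

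The step I anticipate will be the most delicate is the Goursat classification and the accompanying Cartan-compatibility argument: namely, deducing from the surjectivity of $V$ onto each $\pg_i$ that the matching isomorphisms preserve the Cartan decompositions, so that they integrate to genuine isometries up to scale rather than merely to abstract Lie group isomorphisms. Once that is in hand, the dimension count is routine, and the remaining verifications (that the degenerate sub-case of the first case coincides exactly with $\Y$ being a factor, and that $\Y$ is automatically non-factor in the second case) follow from a direct inspection of $Z_G(H)$ using Fact~\ref{cent}.
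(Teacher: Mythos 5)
Your proof is correct in substance but follows a genuinely different path from the paper's. The paper first handles the factor case by a direct argument (producing an $\mathfrak{sl}(2,\R)$ commuting with $\hg$ from a vector $v\in\qg^\perp$ commuting with $\hg$, together with $\mathrm{ad}(z)v$), and then, for the non-factor case, shows that $\Y$ is a \emph{maximal} totally geodesic subspace in the sense of Kollross and simply quotes Kollross's classification results (\cite[Cor.~3.5, Thm.~3.4]{Kollross}) to obtain the dichotomy. You instead run a hands-on Lie-algebraic analysis on the Lie triple system $V=T_o\Y$: decompose $\pg$ along the de~Rham factors, observe $\sum_i(\dim_\C\pg_i-\dim_\C V_i)\leq 1$, and split according to whether one projection $V_{i_0}$ is a hyperplane and the rest are full (giving conclusion (1) or the $\X=\Y\times\Ho$ case), or all projections are full (where a Goursat argument on the semisimple subalgebra $\hg=[V,V]\oplus V$ plus a dimension count gives conclusion (2)). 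What you gain is a self-contained, elementary argument that avoids invoking the classification of totally geodesic hypersurfaces; what the paper gains is brevity.

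A few points worth tightening. First, the Goursat step is cleaner than you anticipate \emph{provided} you record that $\hg$ is semisimple (it is, since $\Y$ is a symmetric subspace of the noncompact type): decompose $\hg$ into its simple ideals $\hg^k$; each $\pi_i(\hg^k)$ is an ideal of the simple $\gg_i$, hence $0$ or $\gg_i$, and the simplicity of $\hg^k$ forces $\pi_i|_{\hg^k}$ to be an isomorphism when nonzero; disjointness of the blocks follows because $\gg_i$ is not abelian. Second, the Cartan-compatibility is also essentially automatic: since $V=\bigoplus_k(\hg^k\cap\pg)$ and $\pi_{i_k}(V)=\pg_{i_k}$, the preimage of $\hg^k\cap\pg$ under $\pi_{i_k}|_{\hg^k}$ is exactly $\pg_{i_k}$, and then $\psi_{i_k,i}(\pg_{i_k})=\pi_i(\hg^k\cap\pg)\subset\pg_i$ with equality by the dimension argument you give. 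Third, you should note that $V_{i_0}$ (in case 1) and the matching isomorphism $\psi_{i,j}$ (in case 2) are automatically compatible with the complex structure, since $V$ is $J$-invariant and $J$ preserves each $\pg_i$; this is what makes $\Y_{i_0}$ a complex divisor, resp.\ $\varphi$ a holomorphic (not merely real-analytic) isometry. None of these gaps are fatal, but they are the places a referee would ask you to fill in.
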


\begin{proof} 
If $\cal Y$ is a factor then as we saw there exists $v\in\qg^\perp\subset\pg$ which commutes with every
element of $\hg=[\qg,\qg]\oplus\qg$. The complex structure on $\pg$ is given by ${\rm ad}(z)$ for an element $z$ in the
center of $\kg$. Since $\cal Y$ is complex, $\qg$ is invariant by ${\rm ad}(z)$ and hence ${\rm
  ad}(z)v$ also belongs to $\qg^\perp$. Hence $\pg=\qg\oplus\R\, v\oplus\R\, {\rm ad}(z)v$ because
$\dim_\C\qg=\dim_\C\pg-1$. It is easily checked that $\R\, v\oplus\R\, {\rm ad}(z)v$ is a Lie triple
system of $\pg$ and that $\R\, [{\rm ad}(z)v,v]\oplus\R\, v\oplus \R\,{\rm ad}(z)v$ is a Lie
subalgebra of $\gg$ which commutes with $\hg$. This subalgebra is either isomorphic to $\C$ or to
${\mathfrak s\mathfrak l}(2,\R)\simeq{\mathfrak{su}}(1,1)$. Since $\gg$ is semisimple, it is
${\mathfrak s\mathfrak l}(2,\R)$. 

If $\Y$ is not a factor, then it is a {\em maximal} totally geodesic subspace of $\X$, in the
terminology of~\cite{Kollross}, meaning that
  if $\mathcal Z$ is a totally geodesic subspace of $\X$ containing $\Y$ then either $\mathcal Z=\Y$
  or $\mathcal Z=\X$. Indeed, if $\mathcal Z\neq \Y$ then $\mathcal Z$ is a (real) hypersurface of
  $\X$. By~\cite[Corollary~3.5]{Kollross}, totally geodesic hypersurfaces of $\X$ must be of the
  form $\mathcal Z_i\times\prod_{j\neq i} \X_j$ where $\mathcal Z_i$ is a totally geodesic
  hypersurface of $\X_i$. Then necessarily $\X_i$ has constant sectional curvature and since it is a
  Hermitian symmetric space, it must be isometric to the disc ${\mathbb H}^1_\C$. Therefore
  ${\mathcal Z}\simeq \R\times\prod_{j\neq i}\X_j$ and by the same argument either $\Y\simeq\{{\rm
    pt}\}\times\prod_{j\neq i}\X_j$, or there is a second factor $\X_j$, $j\neq i$, isometric to a
  disc ${\mathbb H}^1_\C$ and $\Y\simeq \R^2\times\prod_{k\neq i,j}\X_k$. That's a contradiction
  since in the former case $\Y$ is
  a factor while in the latter it is not a complex submanifold of $\X$. Hence $\Y$ is indeed maximal
  and we may apply~\cite[Theorem~3.4]{Kollross} which exactly gives the alternative in our statement.
\end{proof}

\section{Proofs}

\subsection{The main ingredient}\label{BenoistQuint}\hfill

\medskip

Theorem~\ref{ratner} follows from considerations
originating in the celebrated results of M. Ratner on unipotent flows, see e.g. \cite{R} for a
survey. The key result we are going to use is Y.~Benoist and J.-F. Quint Theorem 1.5 in~\cite{BQ3}.

Let us begin by giving the definitions needed to  
quote a downgraded version of~\cite [Theorem 1.5]{BQ3}. Our notation are a bit different from those
of~\cite{BQ3}. Let $G$ be a real Lie group, $\G$ a lattice in $G$, and $H$ a Lie subgroup of
$G$ such that $\Ad(H)$ is a semisimple subgroup of $\rm{GL}(\gg)$ with no compact factors.    

A closed subset $Z$ of $\G\backslash G$ 
is called a finite volume homogeneous subspace if the stabilizer $G_Z$ of $Z$ in $G$
acts transitively on $Z$ and preserves a Borel probability measure $\mu_Z$ on
$Z$. If moreover $G_Z$ contains $H$, $Z$ is said $H$-ergodic if $H$
acts ergodically on $(Z, \mu_Z)$.

Let $\comp\subset \G\backslash G$ be a compact subset of $\G\backslash G$ and $E_{\comp}(H)$ be
  the set of  
$H$-invariant and $H$-ergodic finite volume
homogeneous subspaces $Z$ of $\G\backslash G$ such that $Z\cap\comp\not=\emptyset$. 
We may identify $E_{\comp}(H)$ with a set of Borel probability measures on
$\G\backslash G$ through the map $Z\mapsto \mu_Z$. In particular $E_{\comp}(H)$ is endowed with
the topology of weak convergence, so that a sequence $(Z_n)$ in $E_{\comp}(H)$
converges toward $Z\in E_{\comp}(H)$ if and only if $\mu_{Z_n}$
converges toward $\mu_{Z}$.

\smallskip

Then~\cite [Theorem 1.5]{BQ3} implies the following:

\begin{theo}\label{BQ}
Let $G$ be a real Lie group, $\G$ a lattice in $G$, and $H$ a Lie subgroup of
$G$ such that $\Ad(H)$ is a semisimple subgroup of $\rm{GL}(\gg)$ with no compact factors. Let $\comp\subset \G\backslash G$ be a compact subset. Then
\begin{enumerate}
\item the space $E_{\comp}(H)$ is compact; 
\item if $(Z_n)$ is a sequence of $E_{\comp}(H)$ converging to $Z\in E_{\comp}(H)$, there exists a 
  sequence $(\ell_n)$ of elements of the centralizer of $H$ in $G$ such that $Z_n\cdot\ell_n\subset Z$
  for $n$ large.
\end{enumerate}
\end{theo}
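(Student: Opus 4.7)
The plan is to obtain Theorem~\ref{BQ} as a direct specialization of \cite[Theorem~1.5]{BQ3}. The Benoist--Quint theorem is formulated under essentially the same hypothesis (namely that $\Ad(H)$ is a semisimple subgroup of $\GL(\gg)$ without compact factors), and provides both compactness of the relevant space of homogeneous measures and the structural relation between limits and the sequences converging to them. So the bulk of the work is a translation and verification of hypotheses, not the production of new arguments.

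First I would rewrite the statement purely in measure-theoretic terms: a finite volume homogeneous subspace $Z\in E_\comp(H)$ is uniquely encoded by the probability measure $\mu_Z$, and the topology on $E_\comp(H)$ coincides with the weak-$*$ topology on the corresponding measures. Given a sequence $(Z_n)\subset E_\comp(H)$, each $\mu_{Z_n}$ meets the fixed compact set $\comp$. Non-divergence estimates for unipotent subgroups (which are available under the hypothesis that $\Ad(H)$ is semisimple without compact factors) ensure tightness of the family $\{\mu_{Z_n}\}$, so one extracts a weakly convergent subsequence with limit $\mu$ that is automatically $H$-invariant and still charges $\comp$. The deep content of \cite[Theorem~1.5]{BQ3} is that such a weak limit $\mu$ must itself be the homogeneous probability measure of some $Z\in E_\comp(H)$, and is $H$-ergodic. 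This gives part (1).

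For part (2), the same theorem of Benoist--Quint provides a quantitative refinement: when $\mu_{Z_n}\to\mu_Z$, the individual orbits $Z_n$ can be transported into $Z$ by small elements acting on the right. I would verify from their proof that the transporters $\ell_n$ can be chosen in the centralizer $Z_G(H)$, which is the natural group to consider since right multiplication by $\ell_n$ must commute with the $H$-action (a translate of an $H$-invariant set by $\ell$ is $H$-invariant precisely when $\ell$ centralizes $H$). The conclusion $Z_n\cdot\ell_n\subset Z$ for $n$ large is then exactly what \cite[Theorem~1.5]{BQ3} produces.

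The only genuine obstacle is bookkeeping: matching the Benoist--Quint notational conventions (in particular their notion of ``homogeneous subspace'' and the precise form of the semisimplicity assumption on $\Ad(H)$) with the ones used here, and checking that the passage from a general statement about $H$-invariant $H$-ergodic homogeneous probability measures to the present formulation in terms of subsets $Z$ meeting $\comp$ does not require any extra hypothesis. No additional dynamical input is needed beyond what \cite{BQ3} already supplies.
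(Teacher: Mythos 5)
Your proposal matches the paper's own treatment exactly: the paper does not give an independent proof of Theorem~\ref{BQ}, but simply introduces the notation ($E_\comp(H)$, the weak topology via $Z\mapsto\mu_Z$, etc.) and then states that \cite[Theorem~1.5]{BQ3} implies the result as a ``downgraded version.'' Your verification that the hypotheses on $\Ad(H)$ coincide and that right-translation by centralizer elements is the correct transport mechanism is the same bookkeeping the authors implicitly perform.
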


\subsection{Proof of Theorem~\ref{ratner}} \hfil

\medskip 

We recall the notation from the introduction. We have a symmetric space of the noncompact type $\X$
and $G$ is the connected component of the isometry group of $\X$. In
particular $G$ is a connected semisimple real Lie group with trivial center and without compact
factor. We fix an origin $o\in\X$ and let $K$ be the isotropy group of $G$ at $o$. Moreover $\G$ is 
a torsion-free lattice 
of $G$ and $X$ is the finite volume locally symmetric space $\G\backslash\cal X$.  

Let $(Y_j)$ be a sequence of compact totally geodesic
submanifolds of the noncompact type of $X$ as in the statement of the theorem. 

\bigskip 

The first thing to remark is that by the finiteness result of Fact~\ref{finite},
 up to extracting subsequences, we may 
assume that the submanifolds $Y_j$ are all of the same kind, meaning that they are of the
form $\G\backslash\G g_j\Y$, where $\Y$ is a fixed 
symmetric subspace of $\X$ passing through the origin $o$, and $g_j\in G$ are such that   
$\G \cap g_j S g_j^{-1}$ is a lattice in $g_jS g_j^{-1}$,  $S$ being the
stabilizer of $\Y$ in $G$. We call $H$ the connected semisimple subgroup without compact factor and
with finite center such that $\Y=H o$.  

Moreover, by Fact~\ref{cent}, the hypothesis that the
totally geodesic submanifolds $Y_j$ are not local factors implies that
the centralizer $Z_G(H)$ of $H$ in $G$ is included in $K$  and that $\cal Y$ is the only totally
geodesic orbit of $H$ in $\X$. 

\bigskip

Consider the (right) $S$-invariant subsets $\Gamma\backslash\Gamma g_j S$ in $\Gamma\backslash
G$. They 
support natural $S$-invariant probability measures, but these measures might be non ergodic with
respect to the action of $H\subset S$. To get rid of this problem, we need to consider the action of
$H$ on the orbit of a smaller
subgroup than $S$. Let $S_0$ be the connected component of the stabilizer $S$ of $\Y$. By Fact~\ref{stab},
there exists a subgroup $U<K$ centralizing $H$ such that 
$S_0=HU$. The intersection $H\cap U$ is the center of $H$ and hence is finite. The intersection
$g_j^{-1}\G g_j\cap S$ is by assumption a lattice in $S$ and therefore the intersection $g_j^{-1}\G
g_j\cap S_0$ is a lattice in $S_0$, since $S_0$ has finite index in $S$.  Let $M_j$ be the 
``projection'' of $g_j^{-1}\G g_j\cap S_0$ to $U$, namely the group $\{u\in U\mbox{ such that } u=\g
h\mbox{ for some }
\g\in g_j^{-1}\G g_j\cap S\mbox{ and } h\in H\}$, and let $\bar M_j$ be the
closure of $M_j$. Then $\bar M_j$ is a compact subgroup
of $U$ and we let $S_j:=\bar M_j H$. This time, the right action of $H$ on the $S_j$-invariant
probability measure $\mu_j$ supported on $Z_j:=\Gamma \backslash\Gamma g_j S_j$ is
ergodic. Indeed, 
a $H$-orbit in $Z_j$ is the same as a (left) $M_j H$-orbit in
$S_j$ (because $H$ is obviously normal in $S_j$) and
the group 
$M_j H$ is dense in $S_j$ by construction
(see \cite[Prop. I.(4.5.1)]{Ma}). 
Notice however that the push forward of the measure $\mu_j$ by the projection $\pi:\G\backslash G\fd
X=\G\backslash G/K$ is the same as the push forward of the $S$-invariant probability measure on
$\G\backslash\G g_j S$ by $\pi$. Indeed, $g_jSg_j^{-1}$ (resp. $g_jS_jg_j^{-1}$) is unimodular because it contains its intersection with $\Gamma$ as a lattice, and its Haar measure suitably normalized induces the $S$-invariant (resp. $S_j$-invariant) probability measure on $\Gamma \backslash\G g_j S$ (resp. $\Gamma \backslash\G g_j S_j$). The push forward of these Haar measures define $g_jHg_j^{-1}$-invariant measures supported on $g_j\Y$ and hence they must be proportional. Finally, as they both induce probability measures on $Y_j$, they are equal.
\medskip

We saw in
the proof of Fact~\ref{geod} that $H$ is not contained in any proper parabolic subgroup of $G$ since
$Z_G(H)$ is compact, so that a fortiori for any $g\in G$, $gHg^{-1}$ is not contained in any proper $\G$-rational
parabolic subgroup of $G$. Hence we may apply~\cite[Theorem 2.1]{EM} with a 
probability measure supported on a bounded open subset
of $H$. This result implies in particular that there is a compact subset $\comp$ of $\G\backslash
G$ such that for all $\G g_j\in{\rm
  supp}\,\mu_j$, there exists $h_1,\ldots,h_m\in H$ such that $\G g_j h_1\dots
h_m\in\comp$. Therefore the measures $\mu_j$ belong to the set $E_{\comp}(H)$ of 
$H$-invariant and  
$H$-ergodic finite volume homogeneous subspaces 
of $\G\backslash G$ intersecting $\comp$, which is compact by Theorem~\ref{BQ}.

Hence, after extraction of a
subsequence, the sequence $(\mu_j)$ 
converges weakly to a $H$-ergodic probability measure $\mu$ whose support ${\rm supp}\,\mu$
is a closed $G^\mu$-homogeneous subset of $\G\backslash G$,  where $G^\mu:=\{g\in G : \mu g=\mu\}$
is a (closed) Lie subgroup of 
$G$ containing $H$. Moreover, by the second part of the theorem, there exists a sequence $(\ell_j)$ of
elements of  
the centralizer of $H$ in $G$ such that for $j$ large enough, ${\rm supp}\,\mu_j\subset
({\rm supp}\,\mu)\cdot \ell_j$. 

\medskip

We are going to prove that $G^\mu=G$ and the compactness of the centralizer of $H$ will come into
play in order to neutralize the effect of the $\ell_j$'s.

By fact~\ref{geod}, the connected component $G_0^\mu$ of $G^\mu$ has a totally 
geodesic orbit $G_0^\mu x^\mu$ in $\cal X=G/K$ for some $x^\mu\in \X$. Moreover, we know that ${\rm
  supp}\,\mu=\G\backslash\Gamma g G^\mu$ 
for some $g\in G$ and we have seen above that if $j$ is large, then $Z_j\ell_j=\Gamma \backslash
\Gamma g_j \bar M_j H \ell_j 
\subset \G\backslash\Gamma g  G^\mu$ for some $\ell_j$ in the centralizer of $H$. Hence
$\Gamma\backslash \Gamma g_jHK/K \subset \G\backslash\Gamma g G_0^\mu K/ K$, 
because $\ell_j\in K$ by 
assumption. Therefore, there exists 
$\g_j\in\G$ such that 
$\g_j g_j \Y\subset g G_0^\mu o\subset \cal X$.

 This actually implies that, for $j$ large, the totally geodesic submanifolds $\g_j g_j \Y$
are all included in the totally geodesic submanifold $g G_0^\mu x^\mu$ and thus that $G_0^\mu
o=G_0^\mu x^\mu$.

Let indeed $d$ be the distance in the symmetric space $\cal X$. The function 
$$
\begin{array}{rcl}
\cal X & \fd & \R \\
x & \longmapsto & d(x,gG^\mu_0 x^\mu)
\end{array}
$$
is convex because $gG^\mu_0 x^\mu$ is totally geodesic. Its restriction to $gG^\mu_0o$ is bounded
because for all $a\in G^\mu_0$, $d(gao,gax^\mu)=d(o,x^\mu)$. Therefore its restriction to $\g_j g_j\Y$,
which is also totally geodesic, is both convex and bounded, hence constant equal  
to some $d_j\in\R$. Then, if $d_j\neq 0$, there
exists an isometric embedding from the product $(\g_j g_j\Y)\times\R$ to $\cal X$
which maps $(\g_j g_j\Y)\times\{0\}$ to $\g_j g_j\Y$ and $(\g_j g_j\Y)\times \{d_j\}$ to a totally
geodesic subspace of $gG^\mu_0 x^\mu$, see 
e.g.~\cite[Chap. II.2]{BH}. This is a contradiction with the fact that $\cal Y$ is not a factor,
hence $d_j=0$ for all $j$ large, as claimed.    

As a consequence, $\G\backslash\Gamma g  G_0^\mu  o$
is a closed totally geodesic submanifold in $X$ which contains the submanifolds $Y_j$ (for $j$ large
enough). Since no subsequence of $(Y_j)$ is contained in a closed totally geodesic proper submanifold of
$X$,  $\G \backslash\Gamma g G_0^\mu  o=\G\backslash G/K$ which implies
$G_0^\mu=G$ since $G_0^\mu$ is reductive.  

\medskip

In conclusion, $(\mu_j)$ is a sequence of elements of $E_{\comp}(H)$ which
is compact, and its sole limit point is the unique $G$-invariant probability measure on
$\G\backslash G$, so that it converges to this unique measure. Hence the sequence $(\mu_{Y_j})$
converges to $\mu_X$.

\begin{rema}\label{convsub}
It is straightforward that for any compact subgroup $L\subset K$, the pushforward of the measures
$\mu_j$ to $\G\backslash G/L$ converges towards the pushforward to $\G\backslash G/L$ of
the $G$-invariant probability measure on $\G\backslash G$. 
\end{rema}

\subsection{Proof of Corollary~\ref{ratnercurrents}}\label{proofRC}\hfill
\medskip

The kind of the $Y_j$'s is fixed, so that they are of the form $\G\backslash\G g_j\Y$, where $\Y$ is a fixed Hermitian
symmetric subspace of $\X$ passing through the origin $o$, and $g_j\in G$ are such that   
$\G \cap g_j S g_j^{-1}$ is a lattice in $g_jS g_j^{-1}$,  $S$ being the
stabilizer of $\Y$ in $G$. 

We call $\stabY\subset K$ the compact subgroup $\stabY=K\cap S$. The
homogeneous space $G/K_\Y$ is the $G$-orbit of $T_o\Y$ in the Grassmann manifold of complex
$p$-planes in the tangent 
bundle $T\X$ of $\X$. It is a bundle over $\X$ and we let $\hat X:=\Gamma\backslash G/\stabY$ be the
corresponding Grassmann bundle over $X$.

Every totally geodesic manifold $Y_j$ has a natural lift $\hat Y_j$
to $\hat X$:  a smooth point $y$ of $Y_j$ defines the point $\hat y=T_y Y_j$ in $\hat X$. In fact, $\hat Y_j$ is smooth and isometric to $(\G\cap g_j S g_j^{-1})\backslash g_j\Y$ and the
  natural morphism $\nu_j:\hat Y_j\fd Y_j$ is an immersion which is generically one-to-one. 

For each
  $j$, we denote by $\mu_{\hat Y_j}$ the probability measure on $\hat X$ which 
is obtained by taking the direct image of the measure $\mu_j$ on $\G\backslash G$ defined in the
proof of Theorem~\ref{ratner}. We emphasize that the support of the measure $\mu_{\hat Y_j}$ is
indeed $\hat Y_j$. Let $\om_X$ be the K\"ahler form on $X$ induced by a
$G$-invariant K\"ahler form $\om$ on $\X$. Define ${\rm vol}(Y_j)=\frac{1}{p!}\int_{ Y_j}\om_X^p$ where
$\int_{Y_j}$ means integration over the smooth part of $Y_j$, i.e. ${\rm
  vol}(Y_j)=\frac{1}{p!}\int_{ \hat Y_j}\nu_j^*\om_X^p$.  
Then the probability measure with support $\hat Y_j$ and
density $\frac  
1{p!\,{\rm vol}(Y_j)}\,\nu_j^*\om_X^p$ is equal to $\mu_{\hat Y_j}$. This is
again due to the fact that they are both induced by $g_j H g_j^{-1}$-invariant measures on the orbit
$g_j HK_\Y\subset G/K_\Y$.

For any $(p,p)$-form $\eta$ on $\X$, we define a function $\varphi_\eta$ on $G$ by
$$\varphi_\eta(g):=p!\frac{\eta(e_1,\dots,e_{2p})}{\om^p(e_1,\dots,e_{2p})}$$ 
where $(e_1,\dots,e_{2p})$ is any basis of $T_{gK}g\Y$. Said another way, in restriction to $T_{gK}g\Y$ 
the two $(p,p)$-forms $\eta$ and $\frac1{p!}\om^p$ are proportional, and
$\varphi_\eta(g)$ is the coefficient of proportionality. 
Note that the action of $\stabY$ on $G$ by right multiplication induces the trivial action on
$\varphi_\eta$ by construction so that we will see it as a function on $G/\stabY$ as well. Moreover,
if $\eta $ is the lift of a form on $X$ then $\varphi_\eta$ is well defined on $\G\backslash G$ (and
$\hat X=\G\backslash G/\stabY$). Then by the proof of Theorem~\ref{ratner} and
Remark~\ref{convsub} we have  
$$\frac 1{{\rm vol}(Y_j)}\int_{Y_j} \eta=\frac 1{{\rm vol}(Y_j)}\int_{\hat Y_j} \nu_j^*\eta=\frac
1{p!{\rm vol}(Y_j)}\int_{\hat Y_j} \varphi_\eta\,\nu_j^*\om_X^p=\int_{\hat
  X}\varphi_\eta\,d\mu_{\hat Y_j}\mathop{\longrightarrow}\limits_{j\rightarrow +\infty} \int_{\hat
  X}\varphi_\eta\,d\mu_{\hat X} 
$$
where $d\mu_{\hat X} $ is the probability measure on $\hat X$ induced by the Haar measure $dg$ on $G$
normalized in such a way that $\int_{\G\backslash G}dg=1$.

For any $(p,p)$-form $\eta$, we also define a function $\psi_\eta$ on $G$ by
$$\psi_\eta(g):=\int_K\varphi_\eta(gk) dk 
$$
where $dk$ is the Haar probability measure on $K$. Actually, $\psi_\eta$ is well defined on $\X=G/K$
and in the same way as $\varphi_\eta$, it can be seen as a function on $X=\G\backslash G/K$ if
$\eta$ comes from $X$. 

Now, as
$$\int_{\Gamma\backslash G} \varphi_\eta(gk) dg=\int_{\G\backslash G}\varphi_\eta(g)dg
$$
for any $k\in K$ (just because $dg$ is right invariant), we get for any $(p,p)$-form $\eta$ on $X$
$$\int_{\hat X}\varphi_\eta \,d\mu_{\hat X}=\int_{\G\backslash G}\varphi_\eta(g)\, dg =
\int_K\int_{\G\backslash G}\varphi_\eta(gk)\, dg \,dk=\int_{\G\backslash G}\psi_\eta(g)\,
dg=\frac1{n!{\rm vol}(X)}\int_{X}\psi_\eta \,\om_X^n. 
$$

Let us consider the linear form $\eta\mapsto \psi_\eta(e)$ on the space of $(p,p)$-forms on $\X$. It
only depends on $\eta(o)$, hence there exists a $(p,p)$-form $\Psi$ on $T_o{\X}$ such that
$\psi_\eta(e)=\la \eta,\Psi\ra_o$. Moreover, for any $k\in K$, $\psi_{\eta}(k)=\psi_{k^*\eta}(e)=\la
k^*\eta,\Psi\ra_o=\la \eta,(k^{-1})^*\Psi\ra_o$. As $\psi_\eta$ is $K$-invariant, we conclude that
$\Psi$ is also $K$-invariant. Therefore, $\Psi$ is the restriction of a (unique) $G$-invariant form
on $\X$ that we still denote by $\Psi$. Then, for any $g\in G$,
$\psi_{\eta}(g)=\psi_{g^*\eta}(e)=\la g^*\eta,\Psi\ra_o=\la
\eta,(g^{-1})^*\Psi\ra_{go}=\la\eta,\Psi\ra_{go}$ i.e. $\psi_\eta=\la \eta,\Psi\ra$ on $\X$ for any
$\eta$. 
Finally,
$$
\frac1{n!{\rm vol}(X)}\int_{X}\psi_\eta\, \om_X^n=\frac1{n!{\rm vol}(X)}\int_{X}\la
\eta,\Psi\ra\,\om_X^n=\frac 1{{\rm vol}(X)}\int_X \eta\wedge \star \Psi 
$$
where $\star$ is the Hodge star operator. Setting $\mm:=\star \Psi$, we get the desired result. The form
$\mm$ is closed and positive since this is the case for each current of integration over $Y_j$
(notice that actually, a $G$-invariant form of even degree is automatically closed as its
differential is a $G$-invariant form of odd degree and $G$ contains an involution admitting $o$ as
an isolated fixed point i.e. its differential at $o$ is $-{\rm id}_{T_o \X}$). 

\medskip

By construction, $\mm$ only depends on $\Y$, i.e. on the kind of the $Y_j$'s, and on the
  choice of a $G$-invariant Kähler form $\om$ 
  on $\X$. If $\X$ is irreducible, then there is only one such choice up to a positive constant.    
If $\X$ is not irreducible, then this is not the case. However, if $\om'$ is another $G$-invariant
K\"ahler form, the restriction of $\om^p$ and 
${\om'}^{p}$ to $\Y$ only differ by a multiplicative positive constant $c$. Indeed,
this is clear at $o$ and the two forms are $S$-invariant. As a consequence, the corresponding
functions $\varphi_\eta$ and $\psi_\eta$ differ by the constant $1/c$ and so the resulting forms $\mm$ only
differ by a positive constant.

\medskip

In general, if $(\eta_1,\dots,\eta_m)$ is an orthonormal basis of $G$-invariant $(p,p)$-forms
on $\X$, then it is straightforward from the construction above that $\mm=\sum_{i=1}^m
\varphi_{\eta_i}\,(\star\eta_i)$, the $\varphi_{\eta_i}$ being constant since $\eta_i$ is $G$-invariant.

Assume now that the $Y_j$'s are divisors, that is $p=n-1$. 

Let $\X=\X_1\times\dots\times\X_\ell$ be the decomposition of $\X$ in a product of irreducible
Hermitian symmetric spaces $\X_i$ of dimension $n_i$ associated to isometry groups $G_i$, and
$\om_i$ the unique (up to a positive constant) $G_i$-invariant K\"ahler form on $\X_i$. If the
$Y_j$'s are divisors, then $\mm$ is induced by $\sum_{i=1}^\ell a_i\,\om_i$ for some non-negative real
numbers $a_i$. The positive $(n-1,n-1)$-forms $\eta_i:=\om_i^{n_i-1}\wedge\bigwedge_{j\not=
  i}\om_j^{n_j}$ make up an orthogonal basis of $G$-invariant $(n-1,n-1)$-forms on $X$.

By fact~\ref{div}, one can assume that $\Y=\D\times\X_{k+1}\times\dots\times\X_\ell$, where either
$k=1$ and $\D$ is a divisor in $\X_1$, or $k=2$, $\X_1=\X_2=\Ho$ and $\D\simeq\Ho$ which is
diagonally embedded in $\X_1\times\X_2$.

In the first case, the restriction of all the $\eta_i$'s to $\Y$ vanishes, except when $i=1$ and
this implies that only $a_1>0$. Similarly, in the second case, only $\eta_1$ and $\eta_2$ do not
vanish on $\Y$ and hence only $a_1$ and $a_2$ are positive (and equal if $\om_1$ and $\om_2$ are
chosen in such a way that $\X_1$ and $\X_2$ are isometric). 

As a consequence, $\mm$ is positive (as a (1,1)-form) only if $\X$ is irreducible or if
$\X=\Ho\times\Ho$ and $\Y\simeq\Ho$ is diagonally embedded. However, in all cases, the restriction of
$\mm$ to $Y_j$ never vanishes.

\medskip

\begin{rema}
In the case of the $n$-ball, it is well known that for any $1\leq p\leq n$, the
space of $G$-invariant $(p,p)$-forms is 1-dimensional and generated by $\om^p$ hence in this case we
always have $\mm=\frac{p!}{n!}\,\om_X^p$.  
\end{rema}

\subsection{Proof of Theorem~\ref{negativecurves}}\label{ProofNC}\hfill

\medskip

We only prove the second assertion of the theorem since the first is just a particular case by
Grauert's criterion, see for example~\cite[p. 91]{BHPV}. 

\smallskip

Recall that a divisor $D$ in a compact complex manifold $X$ is {\it exceptional} if there exists a
neighborhood $U$ of $D$ in $X$, a proper bimeromorphic map $\phi:U\fd U'$ onto a (possibly singular)
analytic space $U'$ and a point $x'\in U'$ such 
that $\phi(D)=\{x'\}$, and $\phi$ induces a biholomorphism between $U\backslash D$ and
$U'\backslash\{x'\}$.

\medskip

Let $\X$ be the universal cover of the Hermitian locally symmetric space $X$, $G$ the
connected component of the isometry group of $\X$ and $\G$ the torsion-free lattice of $G$
such that $X=\G\backslash\X$.  

Assume that there exist infinitely many
totally geodesic (irreducible) exceptional divisors $(D_j)_{j\in\N}$ in $X$. 
As before, because of Fact~\ref{finite}, we may assume that the totally geodesic divisors $D_j$ are
of the form $\G\backslash\G g_j\cal D$, where $\cal D$ is a totally geodesic divisor of $\X$
containing a fixed point $o\in\X$. 

 By Fact~\ref{div}, either $D_j$ is not a local factor or
$\X\simeq g_j \cal D\times\Ho$. In the latter case, $G\simeq H_j\times \PU(1,1)$, where $H_j$ is
the connected component of the isometry group of $g_j\cal D$, and this implies that the lattice $\G$
is not irreducible. Indeed, if $S_j$ is the stabilizer of $g_j\cal D$ then by assumption $\Gamma\cap
S_j$ is a lattice in $S_j=H_j\times U$ where $U$ is a compact subgroup of 
$\PU(1,1)$. By~\cite[Thm 1.13]{Rag}, $\G\cdot (H_j\times U)$ is closed in $G$. This is not possible
if $\G$ is irreducible because then the projection of $\G$ onto $\PU(1,1)$ is dense, so that
$\G\cdot(H_j\times U)$ is dense in $G=H_j\times\PU(1,1)$. Therefore, up to a finite covering, we have
$X=D_j\times \Sigma_j$ (where $\Sigma_j$ is a curve) and in 
particular, $D_j$ is not exceptional. 

\medskip

Hence we may assume that the $D_j$'s are of the same kind and are not local factors, so that
  Corollary~\ref{ratnercurrents} applies.   

Let $\om_X$ be a Kähler form on $X$ induced by a $G$-invariant Kähler form of $\X$. We may choose
$\om_X$ so that it represents the first Chern class $c_1(K_X)$ of the canonical bundle $K_X$. All 
volumes will be computed w.r.t. $\om_X$.

Each of the divisors $D_j$ defines an integral class $[D_j]\in H^2(X,\Z)\cap H^{1,1}(X,\R)$. We
write $[D_j]\cdot [\eta]=\int_{D_j}\eta$ for any class $[\eta]\in H^{n-1,n-1}(X,\R)$ and we set
$A_j:=\frac{{\rm vol}(D_j)}{n\,{\rm vol}(X)}=\frac{[D_j]\cdot[\omega_X]^{n-1}}{n!\, {\rm
    vol}(X)}$. Since the pairing between $H^{1,1}(X,\R)$ and 
$H^{n-1,n-1}(X,\R)$ is non degenerate, Corollary~\ref{ratnercurrents} implies that  
$$
\frac 1{A_j} [D_j]\mathop{\longrightarrow}\limits_{j\rightarrow +\infty}[\mm]\leqno{(\star)}
$$
for some closed non-negative $(1,1)$-form $\mm$ which does not vanish in restriction to the $D_j$'s.
The canonical class $[K_X]$ of $X$ is ample and is equal to $[\om_X]$, so let
$m\in\N^\star$ be large enough and $H_1,\dots,H_{n-2}\in |mK_X|$ be irreducible divisors in the
linear system $|mK_X|$ such that $N:=H_1\cap\dots\cap H_{n-2}\subset X$ is a smooth surface
intersecting all the divisors $D_j$ transversally.

Consider now the sequence of curves $(C_j)_{j\in\N}$ of $N$ defined by $C_j=N\cap D_j$. By
definition, for any $j$, there exists a bimeromorphic map $\phi_j:X\fd X_j'$ which contracts the
divisor $D_j$, so that the curve $C_j$ is contracted by the morphism ${\phi_j}_{|N}:N\fd\phi_j(N)$,
hence has negative self intersection by Grauert's criterion (see~\cite[p. 91]{BHPV} for instance).

Let $\Omega_N$ be the restriction of $\mm$ to $N$ and consider now the intersection numbers on $N$
$$
I_j:=\Bigl([\Omega_N]-\frac 1{A_j}[C_j]\Bigr)\cdot \frac 1{A_j}[C_j]=\Bigl([\mm]-\frac
1{A_j}[D_j]\Bigr)\cdot \frac 1{A_j}[D_j]\cdot m^{n-2} [\om_X]^{n-2} 
$$
(here we used $[N]=m^{n-2} [\om_X]^{n-2}$). 

On the one hand, 
$I_j\mathop{\longrightarrow}\limits_{j\rightarrow +\infty} 0$ by $(\star)$, 
and on the other hand, since $C_j^2<0$, we have $I_j\geq[\mm]\cdot\frac 1{A_j}[C_j]=m^{n-2} \frac
1{A_j}[D_j]\cdot [\mm]\cdot [\om_X]^{n-2}=cm^{n-2}{n!\,{\rm vol}(X)}$ for any $j$ and for some
positive constant $c$, a contradiction. We used the fact that since $\mm$ does not vanish in
restriction to the $D_j$'s, the restriction of $\mm\wedge \om_X^{n-2}$ to $D_j$ is equal to $c$
times the restriction of $\om_X^{n-1}$ to $D_j$ for some positive constant $c$, because both are
invariant forms of bidegree $(n-1,n-1)$.

\end{document}